\definecolor{refkey}{gray}{.5}   % graylevel for refs
\definecolor{labelkey}{gray}{.5} % graylevel for labels
\newtheorem{thm}{Theorem}[section]
\newtheorem{prop}{Proposition} [section]%[thm]
\newtheorem{lem}{Lemma} [section] %[thm]
\newtheorem{cor}{Corollary}[section]%[thm]
\theoremstyle{definition}
\newtheorem{defi}{Definition}%[thm]
\newtheorem{remark}{Remark}
\numberwithin{equation}{section}
\newcommand{\N}{\mathbb{N}}  % The nonegative integer numbers.
\DeclareMathOperator{\lip}{Lip}
\DeclareMathOperator{\essinf}{ess\,inf}
\DeclareMathOperator{\esssup}{ess\,sup}
\begin{document}
	
	% TITLES, MAIL AND ADDRESS(标题、邮件及地址)
	%% The title of the paper goes here.  Edit to your title
	\title[On $\mu$-Dvoretzky random covering of the circle.]{On $\mu$-Dvoretzky random covering of the circle.}
	%% Now edit the following to give your name and address:
	\author{AIHUA FAN}	\author{Davit Karagulyan}
	\address{Aihua Fan: LAMFA, UMR 7352, CNRS, University of Picardie, 33 rue Saint Leu, 80039 Amiens CEDEX
		1, France}
	\email{\href{mailto:ai-hua.fan@u-picardie.fr}{ai-hua.fan@u-picardie.fr}}

	\address{Davit Karagulyan: Department of Mathematics, University of Maryland, College Park 20742, USA}
	\email{dkaragul@umd.edu}

	% you can change the mail address and the url
	%\thanks{}
	% authour two
	%\author{Second Author}
	%\address{Department}
	%\urladdr{\href{second web}{second web}}
	%\thanks{}

	%% ABSTRACT(摘要)
	\begin{abstract}
In this paper we study the Dvoretzky covering problem with non-uniformly distributed centers. When the probability law of the centers admits an absolutely continuous density which satisfies 
a regular condition on the set of essential infimum points,  we give a necessary and sufficient condition for covering the circle. When the lengths of covering intervals are of the form $\ell_n = \frac{c}{n}$,  we give a necessary and sufficient condition for covering the circle, without imposing any regularity on the density function.
	\end{abstract}
	%% Do the same for the third author
	\maketitle
	
	%\subjclass[2000]{Primary }
	%    The 2010 edition of the Mathematics Subject Classification is
	%    now available.  If you are citing a classification from the
	%    new scheme, use the following input coding instead.
	%\subjclass[2010]{Primary }

	% TABLES, LIST(目录及图片、表格的内容)
	%\tableofcontents
	%\listoffigures
	%\listoftables
	
	% BEGING YOUR WRITE
	
	%%==============SECTION ONE=================================
	% SEC: INTRODUCTION 
	\section{Main Statement}

	Let $\mu$ be a Borel probability measure on the circle $\mathbb{T}
	:= \mathbb{R}/\mathbb{Z}\equiv [0,1)$ identified with the interval $[0,1)$ and let $(\ell_n)_{n\ge 1}$ be a sequence of
	positive numbers  with $0<\ell_n <1$. % and $\ell_n \searrow 0$.
	 Assume that $(\xi)_{n\ge 1}$ is  a sequence of i.i.d. random variables  having $\mu$ as probability law. Then, for each $n\ge 1$, we consider the random interval
	 $$I_n : = (\xi_n -\ell_n/2, \xi_n +\ell_n/2)$$ 
	  of length $\ell_n$ and centered at $\xi_n$. Sometimes, we say that $I_n$
	is the ball $B(\xi_n, r_n)$ centered at $\xi_n$ and of radius
	$r_n := \ell_n/2$. Under what condition on $\mu$ and on $(\ell_n)$ have we
	$$
	    P\big( \mathbb{T}= \limsup_{n\to \infty} I_n\big) =1 ?
	$$
	%The {\em $\mu$-Dvoretzky covering problem} is stated as follows.
	If the answer is affirmative, we say that $\mathbb{T}$ is covered 
	for the {\em $\mu$-Dvoretzky covering}. 
		We can also ask if a given compact set is covered or not. 
		Without loss of generality, we always assume that $(\ell_n)$ is decreasing.
	 
	\medskip
	 
	When $\mu$ is the Lebesgue measure on $\mathbb{T}$, it is the
	classical Dvoretzky covering problem \cite{Dvoretzky1956}(1956), to which a necessary and sufficient condition for $\mathbb{T}$ to be covered is 
	\begin{equation}\label{cond:Shepp}
	    \sum_{n=1}^\infty \frac{1}{n^2}\exp(\ell_1+\cdots +\ell_n)=\infty.
	\end{equation}
	This is called {\em Shepp's condition}, which was obtained by L. Shepp \cite{Sp}(1972). A compact set $F \subset \mathbb{T}$ is covered if and only if 
	\begin{equation}\label{cond:Kahane}
	{\rm Cap}_{\Phi}(F)=0
	\end{equation}
	where
	$$
	        \Phi(t, s) = \exp \sum_{n=1}^\infty (\ell_n -|t-s|)_+.
	$$
	This is called {\em Kahane's condition}, which was obtained by J-P. Kahane \cite{Kahane1998}(1987). Recall that ${\rm Cap}_{\Phi}(F)=0$ means that
	for any Borel probability measure $\sigma$ supported by $F$ we have
	$$
	    \int_{\mathbb{T}} \int_{\mathbb{T}} \Phi(t,s) d\sigma(t) d\sigma(s) =+\infty.
	$$
	We refer to \cite{19kahane1985some} for the theory of capacity. 
	The above cited results due to Shepp and Kahane 
	will be our basic useful facts.
	Kahane's book \cite{Kahane1985} contains references on the study of classical Dvoretzky covering before 1985. For later works, let us only cite \cite{BarralFan,
		Durand2010, EP2018, Fan1995, Fan2002, Fan2004,  FK1993, FK2001,FLL2010, FW2004, FJJS2018, JJKLSX2017, JS2008, OSW2017,Seuret2018,Yu2003}. A first study on  $\mu$-Dvoretzky covering problem for Gibbs measures $\mu$ was  made in \cite{tang}, where the author used the method taken from \cite{A-Sh-T} in order to find the optimal covering exponent $t$ for $\ell_n = a/n^t, (a>0, t>0)$. 
	
\medskip
	
	In this paper, we will study the $\mu$-Dvoretzky covering problem when $\mu$ admits a density. In the following, $f$ will always denote a density function and the measure of density $f$ will be
	denoted by $\mu_f$.  The solution to the problem depends on the 
	essential infimum of the density function. 
	\begin{defi}
		For a given Borel function $f:\mathbb{T} \rightarrow \mathbb{R}$, the {\em essential infimum} of $f$ is  denoted by $ m_f$, i.e.
		$$
		m_f:= \essinf_{\mathbb{T}}f=\sup \{a \in \mathbb{R}: a\le f(x) \ a.e. \}
		%= \inf_{a \in \mathbb{R}}\{P(f(x)<a|x \in \mathbb{T})>0\}.
		$$
		where "a.e." refers to the Lebesgue measure.
	\end{defi}
We can define the local essential infima function $\underline{E}_f(x)$ and the local essential supprema function $\overline{E}_f(x)$
(see  Section \ref{sect:essential-inf} for definitions). 
 The set of essential infimum points of $f$, denoted by $K_f$ is defined to be the set of those $x$ such that  $\underline{E}_f(x) = m_f$.  It will be proved that $K_f$ is a non-empty compact set (Proposition \ref{prop:EI}). 
 \medskip
 
 The flatness of the density will also play a role. It is considered as a regularity of the density.

\iffalse	
	Let $B(x, r)$ be a ball and  $a>0$,  we denote $B(x, ar)$ by $a B(x, r)$. So, for a non-empty  open interval $U$,   $a U$ stands for the interval of the same center as 
	$U$ and of $a$ times the length of $U$. This is not a standard notation, but it is convenient for us in this note.

Let $f$ be a density function on the unit circle $\mathbb{T}$. We denote by $\mu_f$ the absolutely continuous measure with the density $f$. For the ball $B(x, r_n) \subset \mathbb{T}$ we denote by $\mu_f(B(x, r_n))$ the measure of the ball.

For the given density function $f$ we denote by $K_f$  its set of essential infimum points (see definition \eqref{essinfset}). See Section \ref{prep} for definition of the notations $\overline{E}_f({x})$, $\underline{E}_f({x})$ etc.

The following property of the density function $f$ at a point will be important for us.
\fi

\begin{defi}\label{flat}
A point $x \in \mathbb{T}$ has the 
{\em flatness property} for the measure $\mu_f$ and the sequence $\{\ell_n\}_{n \geq 1}$ if 
\begin{equation}\label{bound1}
\sum_{n=1}^\infty |\mu_f(B(x, r_n))-m_f \ell_n|<\infty.
\end{equation}
The set of all flat points will be denoted by $F_f(\{\ell_n\}_{n \geq 1})$ or simply by $F_f$. 
\end{defi}

	Here is another regularity of the density function $f$: there exists a sequence of points $\{x_n\}_{n \geq 1}\subset \mathbb{T}$ such that
	\begin{equation}\label{dom_1}
	\lim_{n \rightarrow \infty}\overline{E}_f(x_n)=m_f.
	\end{equation}
In is easy to see, that if $f$ is continuous at least at one point of the set $K_f$, then the condition \eqref{dom_1} is fulfilled.

We are now ready to state one of the main theorems in this paper.
\begin{thm}\label{thm1}
Let $\mu_f$ be a Borel probability measure on $\mathbb{T}$ with the density function $f$ and $\{\ell_n\}_{n=1}^\infty$ a sequence of positive numbers. Assume that the condition \eqref{dom_1} is fulfilled. 
We distinguish two cases.

$(1)$ Assume
$m_f = 0$ and $K_f$ is countable. Then, the circle is covered for the $\mu_f$-Dvoretzky covering if and only if the following two conditions are satisfied:
\begin{equation}\label{cond0}
\forall x \in K_f,\quad \sum_{n=1}^\infty \mu_f(B(x, r_n))=\infty;
\end{equation}

\begin{equation}\label{cond01}
\forall a> 0, \sum_{n=1}^\infty \frac{1}{n^2}e^{a(\ell_1+\dots + \ell_n)}=\infty.
\end{equation}

$(2)$ Assume $m_f>0$ and there exists a sequence $\{a_n\}_{n \geq 0}$, with $a_0 =0$, so that the set $K_f \setminus \cup_{n \geq 0}(a_n + F_f \cap K_f)$ is at most countable. Then, the circle is covered for the $\mu_f$-Dvoretzky covering if and only if the following two conditions are satisfied:
\begin{equation}\label{cond1}
\forall a> m_f, \sum_{n=1}^\infty \frac{1}{n^2}e^{a(\ell_1+\dots + \ell_n)}=\infty; \quad {\rm Cap}_{\Phi^{(m_f)}}(F_f \cap K_f)=0,
\end{equation}
%and for any measure $\sigma$ supported on $K_f$, we have
%\begin{equation}\label{cond2}
%\int_{K_f} \int_{K_f} \exp{\left( m_f \sum_{k=1}^\infty (l_k - %|t-s|)_+\right)}d\sigma(s)d\sigma(t)=\infty.
%\end{equation}
where the capacity refers to the kernel
$$
      \Phi^{(m_f)}(t,s) = \exp \sum_{n=1}^\infty m_f (\ell_n - |t-s|)_+.
$$ 
\end{thm}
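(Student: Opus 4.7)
The plan is to reduce the $\mu_f$-Dvoretzky problem to classical Dvoretzky/Kahane covering problems via localization: Shepp's criterion is used on regions where the local essential infimum is strictly above $m_f$, while Kahane's capacity criterion with the rescaled kernel $\Phi^{(m_f)}$ is used on the flat essential-infimum set. The elementary input is $P(x\in I_n)=\mu_f(B(x,r_n))$ together with independence of the $\xi_n$, so that pointwise covering at a fixed $x$ is governed by Borel--Cantelli: $x\in\limsup_n I_n$ almost surely iff $\sum_n\mu_f(B(x,r_n))=\infty$.

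For necessity in Case~(1), a failure of \eqref{cond0} at some $x\in K_f$ immediately yields $x\notin\limsup_n I_n$ a.s. For the Shepp-type conditions \eqref{cond01} and the first half of \eqref{cond1}, suppose the relevant series converges at some $a>m_f$; by \eqref{dom_1} pick $x^*$ with $\overline{E}_f(x^*)<a$, then an arc $V\ni x^*$ on which $f\le a$ a.e., so that on the interior of $V$ one has $\mu_f(B(y,r_n))\le a\ell_n$ for large $n$. This dominates the $\mu_f$-covering of $V$ by a classical Dvoretzky covering with lengths $a\ell_n$, whose failure follows from Shepp's own theorem. The capacity necessity in \eqref{cond1} is analogous: on $F_f\cap K_f$ the flatness bound \eqref{bound1} makes the covering probabilities coincide with those of a classical Dvoretzky with lengths $m_f\ell_n$ up to summable errors, so Kahane's theorem applied to kernel $\Phi^{(m_f)}$ forces $\mathrm{Cap}_{\Phi^{(m_f)}}(F_f\cap K_f)=0$.

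For sufficiency I partition $\mathbb{T}$ into (i) the open set $\mathbb{T}\setminus K_f$; (ii) the countable exceptional subset of $K_f$ (all of $K_f$ in Case~(1), or $K_f\setminus\bigcup_n(a_n+F_f\cap K_f)$ in Case~(2)); and (iii) the union of translates $a_n+(F_f\cap K_f)$ in Case~(2). On (i), a countable open cover by arcs $U_\alpha$ with $\underline{E}_f\ge a_\alpha>m_f$ on $U_\alpha$ reduces the local $\mu_f$-covering to a classical Shepp/Kahane covering with inflated lengths $a_\alpha\ell_n$, which \eqref{cond01} or \eqref{cond1} guarantees. On (ii), each point is covered pointwise by Borel--Cantelli, using \eqref{cond0} in Case~(1) and the automatic divergence $\sum_n\mu_f(B(x,r_n))\ge m_f\sum_n\ell_n=\infty$ in Case~(2). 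On (iii), each translate is handled by Kahane's capacity argument with kernel $\Phi^{(m_f)}$; translation invariance of $\Phi^{(m_f)}$ makes the capacity hypothesis simultaneous across all countably many translates, and a countable intersection of almost sure events is almost sure.

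The main obstacle is the sufficiency on $F_f\cap K_f$: the density there is genuinely $m_f$ on average and cannot be inflated to apply Shepp, so one must reproduce Kahane's second-moment estimate with the perturbation $|\mu_f(B(x,r_n))-m_f\ell_n|$ absorbed into an effective kernel. The flatness bound \eqref{bound1} is precisely the input that keeps this perturbation summable and singles out $\Phi^{(m_f)}$ as the sharp kernel. A secondary subtlety is verifying that the three-piece decomposition above is exhaustive, which is where the hypothesis that $K_f\setminus\bigcup_n(a_n+F_f\cap K_f)$ is at most countable in Case~(2) is used.
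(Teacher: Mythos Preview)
Your decomposition and overall strategy match the paper's: cover $\mathbb{T}\setminus K_f$ via comparison with classical Dvoretzky on arcs where $f\ge a>m_f$, handle countable pieces pointwise by Borel--Cantelli, and treat $F_f\cap K_f$ and its translates through the kernel $\Phi^{(m_f)}$. The ingredients you list are the right ones.

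However, you have inverted the easy and the hard direction for the capacity condition in Case~(2). The \emph{sufficiency} of $\mathrm{Cap}_{\Phi^{(m_f)}}(F_f\cap K_f)=0$ for covering $F_f\cap K_f$ is not the obstacle: since $f\ge m_f$ a.e.\ on all of $\mathbb{T}$, the comparison principle (the paper's Theorem~\ref{thm3} and Corollary~\ref{f-0}(1)) immediately transfers classical covering---which Kahane's theorem gives once the capacity vanishes---to $\mu_f$-covering. No flatness is used here. The genuine obstacle is \emph{necessity}: showing that if $F_f\cap K_f$ is $\mu_f$-covered then its $\Phi^{(m_f)}$-capacity is zero. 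At this point the comparison principle is unavailable (it would require $f\le m_f$ near the set, which fails), and one cannot simply ``apply Kahane's theorem,'' since Kahane's criterion is stated for the uniform model. The paper instead runs Billard's second-moment martingale directly in the $\mu_f$-model (Theorem~\ref{thm:B}): for any probability $\sigma$ on $F_f\cap K_f$, the inequality $\mu_f(B(t,r_n)\cap B(s,r_n))\le m_f(\ell_n-|t-s|)_+ + |\mu_f(B(s,r_n))-m_f\ell_n|$ together with the flatness bound~\eqref{bound1} shows $\mathbb{E}M_n^2$ is dominated by the $\Phi^{(m_f)}$-energy of $\sigma$ times a bounded factor. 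This is precisely the ``second-moment estimate with the perturbation absorbed'' that you describe---but it belongs to the necessity argument (the paper's Proposition~\ref{t-1}), not to sufficiency.
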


\iffalse
\begin{remark}
In the above theorem it is sufficient to require the property \eqref{bound1} only for a subset $K_0 \subset K_f$, for which there exists $\{a_n\}_{n \geq 1}$, $a_n \in [0,1), n \geq 1$ so that
$$
\sigma\left(K_f \setminus \bigcup_{n \geq 1}\{K_0 + a_n\}\right)=0.
$$
for arbitrary measure $\sigma$.
\end{remark}
\fi

%The flatness condition in Theorem \ref{thm1} is a kind of regularity of the density %function $f$. 
We have the following corollaries of Theorem \ref{thm1}.
\begin{cor}\label{cor1}
	Assume that for all $x \in K_f$, there is an open neighborhood  $U$ of $x$ and a Lipschitz function $g_x \in \lip(U)$, such that $f(t) \leq g_x(t)$ for almost every $t \in U$ and $f(t)=g_x(t)$ on $K_f \cap U$. \\
	\indent {\rm (1)}  If $m_f=0$ and $\sum_{n=1}^\infty \ell_n^2<\infty$,  there will be no $\mu_f$-Dvoretzky covering. \\
		\indent {\rm (2)}
	If $m_f>0$,  the circle is covered if and only if the following two conditions are satisfied
\begin{equation}
\forall a> m_f, \sum_{n=1}^\infty \frac{1}{n^2}e^{a(\ell_1+\dots + \ell_n)}=\infty; \quad {\rm Cap}_{\Phi^{(m_f)}}(K_f)=0,
\end{equation}

\begin{cor}\label{cor2}
Assume there exists $U \subset \mathbb{T}$, so that $f(x)=m_f$, for a.e. $x \in U$. Then the circle is covered for the $\mu_f$-Dvoretzky covering if and only if
$$
\sum_{n=1}^\infty \frac{1}{n^2}e^{m_f(\ell_1+\dots + \ell_n)}=\infty.
$$
\end{cor}

	\iffalse
Assume for $\forall x \in K_f$ there is an open set $U$, with $x \in U$, and a function $g \in \lip(U)$, such that $f(t) \leq g_x(t)$ for almost all $x \in U$ and $f(x)=g(x)$ on $K_f \cap U$. Then the following two condition are necessary and sufficient for covering the circle:
\begin{equation}\label{cond1c}
\sum_{n=1}^\infty \frac{1}{n^2}e^{a(l_1+\dots + l_n)}=\infty, \hbox{ for all }a>m_f,
\end{equation}
and for any measure $\sigma$ supported on $K_f$, we have
\begin{equation}\label{cond2c}
\int_{K_f} \int_{K_f} \exp{\left( m_f \sum_{k=1}^\infty (\ell_k - |t-s|)_+\right)}d\sigma(s)d\sigma(t)=\infty.
\end{equation}
\fi
\end{cor}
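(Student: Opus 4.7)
The plan is to exploit the Lipschitz majorant to produce a quantitative flatness estimate on $K_f$ and then to invoke Theorem~\ref{thm1}. A preliminary step is to show that $g_x(x)=m_f$ for every $x\in K_f$. One direction follows from $g_x\ge f$ a.e.\ and the continuity of $g_x$: $\inf_{B(x,r)} g_x = \essinf_{B(x,r)} g_x \ge \essinf_{B(x,r)} f \to m_f$ as $r\to 0$, giving $g_x(x)\ge m_f$; the reverse follows from $f(x)=g_x(x)$ together with the lower-semicontinuous representative $f=\underline{E}_f$ on $K_f$, which has value $m_f$ at points of $K_f$. The Lipschitz bound then reads $g_x(t)\le m_f+L_x|t-x|$ on $U$ (with $L_x=\lip(g_x)$), and combining $f\le g_x$ a.e.\ with $f\ge m_f$ a.e.\ yields the crucial estimate
\[
0\ \le\ \mu_f(B(x,r_n))-m_f\,\ell_n\ \le\ L_x\,\ell_n^2/4,\qquad x\in K_f,\ n\text{ large}.
\]

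For Part~(1), fix any $x\in K_f$, which is non-empty by Proposition~\ref{prop:EI}. With $m_f=0$, the estimate reduces to $\mu_f(B(x,r_n))\le L_x\ell_n^2/4$, so $\sum_n\mu_f(B(x,r_n))<\infty$ by the hypothesis $\sum_n \ell_n^2<\infty$. The events $\{\xi_n\in B(x,r_n)\}$ are independent with those probabilities; the Borel--Cantelli lemma forces $P(x\in\limsup I_n)=0$, and therefore $P(\mathbb{T}=\limsup I_n)=0$, i.e.\ no covering occurs.

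For Part~(2), we apply Theorem~\ref{thm1}(2). Summing the estimate gives $\sum_n|\mu_f(B(x,r_n))-m_f\ell_n|\le (L_x/4)\sum_n\ell_n^2$, so as soon as $\sum_n\ell_n^2<\infty$ every $x\in K_f$ is a flat point, whence $F_f\cap K_f=K_f$. The technical hypothesis of Theorem~\ref{thm1}(2) on $K_f\setminus\bigcup_n(a_n+F_f\cap K_f)$ then holds vacuously with $a_0=0$, and the capacity condition $\mathrm{Cap}_{\Phi^{(m_f)}}(F_f\cap K_f)=0$ coincides with the corollary's condition $\mathrm{Cap}_{\Phi^{(m_f)}}(K_f)=0$. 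The biconditional is then exactly Theorem~\ref{thm1}(2).

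The principal obstacle is the regime $\sum_n\ell_n^2=\infty$ in Part~(2): the Lipschitz estimate alone no longer forces $K_f\subseteq F_f$. In this regime, however, $\ell_n$ decays slowly enough that $S_\infty=\infty$ and the kernel $\Phi^{(m_f)}$ becomes super-singular near the diagonal (growing faster than any polynomial in $|t-s|^{-1}$), so that $\mathrm{Cap}_{\Phi^{(m_f)}}(K_f)=0$ holds automatically for any $K_f$ containing at least two points, Shepp's condition is trivially satisfied, and covering should follow from a direct second Borel--Cantelli argument comparing $\mu_f$ with $m_f\cdot\mathrm{Leb}$; making this reduction rigorous and uniform across $K_f$ is the delicate point of the argument.
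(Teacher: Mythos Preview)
Your approach is essentially the paper's: derive $0\le\mu_f(B(x,r_n))-m_f\ell_n\le C_x\ell_n^2$ from the Lipschitz majorant (using $g_x(x)=m_f$), dispose of Part~(1) by Borel--Cantelli, and for Part~(2) conclude $K_f\subseteq F_f$ and invoke Theorem~\ref{thm1}(2). One point you omit is the verification of hypothesis~\eqref{dom_1} of Theorem~\ref{thm1}; the paper records (via Corollary~\ref{cont}) that $f\le g_x$ a.e.\ with $g_x$ continuous and $g_x(x)=m_f$ forces $\overline{E}_f(x)\le m_f$, which is precisely~\eqref{dom_1}.

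Where you depart from the paper is in treating the regime $\sum_n\ell_n^2=\infty$ as ``the principal obstacle'' requiring a delicate direct argument. The paper simply writes ``we can assume $\sum_n\ell_n^2<\infty$'' and moves on; the justification is supplied by Lemma~\ref{2-summable} and Proposition~\ref{c-2}. If $\sum_n\ell_n^2=\infty$, Lemma~\ref{2-summable} gives $\sum_n n^{-2}e^{a(\ell_1+\cdots+\ell_n)}=\infty$ for every $a>0$, so the first condition of the corollary holds, and taking $a=m_f$, Proposition~\ref{c-2} shows the circle is covered. The capacity condition also follows: the affine change of variable $t\mapsto m_f t$ (as in the proof of Lemma~\ref{lem-1}) identifies $\Phi^{(m_f)}$-energy on $K_f$ with classical Kahane energy for the lengths $\{m_f\ell_n\}$ on $m_f K_f$, and Shepp's condition for those lengths together with Kahane's theorem yields zero capacity for every compact set. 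Hence in this regime both sides of the biconditional hold automatically, and no second Borel--Cantelli argument is needed.
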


In the special case of $\ell_n = \frac{c}{n}$ ($c>0$), we don't require
any regularity of $f$.
%In the paper we also fully prove the Dvoretzky covering theorem %for the following particular case:
\begin{thm}\label{thm_seq}
Suppose $\ell_n = \frac{c}{n}$ ($c>0$). Let $\mu_f$ be an arbitrary 
Borel probability measure with density $f$.  A necessary and sufficient condition for covering the circle in the $\mu_f$-Dvoretzky covering is
$
cm_f\geq 1.
$
\end{thm}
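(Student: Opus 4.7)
For sufficiency ($cm_f\ge 1$), the plan is a Bernoulli coupling reducing to classical Shepp. Since $f\ge m_f$ a.e., write $\mu_f = m_f\,\mathrm{Leb}+(1-m_f)\nu$ and realise $\xi_n = B_n U_n + (1-B_n) V_n$ with $B_n\sim\mathrm{Bernoulli}(m_f)$, $U_n$ uniform on $\mathbb{T}$, and $V_n\sim\nu$, all independent across $n$; the degenerate cases $m_f\in\{0,1\}$ are handled separately ($m_f=0$ makes the condition unreachable, $m_f=1$ is classical Shepp). Enumerate $\{n:B_n=1\}$ as $n_1<n_2<\dots$. By the strong law of large numbers, $n_k/k\to 1/m_f$ a.s.\ and in fact $\sum_{j\le k}1/n_j = m_f\log k + O(1)$ a.s. Conditional on $(n_k)$, the subcollection $\{I_{n_k}\}$ is a classical Dvoretzky process with uniform centers and deterministic lengths $c/n_k$, whose Shepp series is $\sum_k k^{-2}\exp(c\sum_{j\le k}1/n_j) \asymp \sum_k k^{cm_f-2}$, divergent iff $cm_f\ge 1$. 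Shepp's theorem applied conditionally yields a.s.\ coverage by the subcollection, and \emph{a fortiori} by the full collection.

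For necessity ($cm_f<1$), the plan is a Kahane-type capacity argument. A direct computation gives $\mu_f(B(t,r_n)\cap B(s,r_n)) = F_u(r_n - h)\mathbbm{1}_{r_n>h}$ with $u = (t+s)/2$, $h = |t-s|/2$, $F_u(r) = \mu_f(B(u,r))$; summing over $n\le\lfloor c/(2h)\rfloor$ and invoking Lebesgue's differentiation theorem yields the local asymptotic $\sum_n\mu_f(B(t,r_n)\cap B(s,r_n)) = c f(u)\log(1/|t-s|) + O(1)$ at Lebesgue points of $f$. Fix $\varepsilon>0$ with $c(m_f+\varepsilon)<1$. By definition of $m_f$, $\mathrm{Leb}\{f<m_f+\varepsilon\}>0$; pick a Lebesgue density point $y_0$ of this set and take $\sigma$ to be normalised Lebesgue measure on $A := B(y_0,\delta)\cap\{f<m_f+\varepsilon\}$ for $\delta$ small. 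For $\sigma\times\sigma$-a.e.\ $(t,s)$ one then obtains $\Phi_f(t,s) := \exp\sum_n\mu_f(B(t,r_n)\cap B(s,r_n)) \le C|t-s|^{-c(m_f+O(\varepsilon))}$, which is integrable under $\sigma\times\sigma$ since $c(m_f+O(\varepsilon))<1$. A Kahane-type capacity criterion --- the analogue of \eqref{cond:Kahane} for $\mu_f$-covering, obtained by adapting Kahane's second-moment argument to the Bernoulli events $\{x\in I_n\}$ with success probabilities $\mu_f(B(x,r_n))$ --- then gives $P(\mathbb{T}\subset\limsup I_n)<1$, upgraded to $0$ by a Kolmogorov zero-one law.

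The main obstacle lies in the necessity step bounding $\Phi_f$ for $\sigma\times\sigma$-a.e.\ $(t,s)$: pointwise bounds such as $f(u)\le m_f+\varepsilon$ are not available because the midpoint $u$ of two points of the low-density set $A$ need not itself lie in $A$. The route is to replace the pointwise inequality by an integral/maximal estimate for $F_u(r)/(2r)$ valid on a subset of $\sigma$-measure close to $1$ (obtained from Lebesgue density at $y_0$ together with an Egorov argument), while the contribution of $y\notin B(y_0,\delta)$ to $F_u(r)$ is controlled separately using that $|y-u|$ stays bounded below when $u$ is near $y_0$. A secondary point is to formalize the Kahane-type capacity criterion in the present generality, without invoking the regularity \eqref{dom_1} used in Theorem \ref{thm1}.
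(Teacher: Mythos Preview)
Your sufficiency argument is correct and is essentially the paper's comparison principle (Theorem~\ref{thm3}) specialised to the decomposition $\mu_f=m_f\,\mathrm{Leb}+(1-m_f)\nu$, just phrased as an explicit Bernoulli coupling rather than as an abstract comparison. The paper instead invokes Proposition~\ref{c-2} (which rests on Theorem~\ref{thm3} and the local Kahane criterion), but the underlying mechanism is the same; your direct coupling is arguably more elementary, while the paper's formulation is more modular. The a.s.\ estimate $\sum_{j\le k}1/n_j=m_f\log k+O(1)$ is justified by writing it as $\sum_{i\le n_k}B_i/i$ and using that $\sum_i(B_i-m_f)/i$ is an $L^2$-bounded martingale.

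For necessity you have the right architecture (Billard second moment plus Egorov on Lebesgue differentiation), but the midpoint route creates exactly the obstacle you flag, and your proposed maximal/Egorov fix is both vague and unnecessary. The paper sidesteps the midpoint entirely with a one-sided inequality that you are missing: since $B(s,r_n)$ is the disjoint union of $B(t,r_n)\cap B(s,r_n)$ and an interval of length $\min(|t-s|,\ell_n)$ on which $f\ge m_f$ a.e., one has for \emph{all} $t,s$
\[
\mu_f\bigl(B(t,r_n)\cap B(s,r_n)\bigr)\ \le\ m_f(\ell_n-|t-s|)_+\ +\ \bigl(\mu_f(B(s,r_n))-m_f\ell_n\bigr).
\]
This depends on $s$ only through $\mu_f(B(s,r_n))$, so Egorov on $s\mapsto\mu_f(B(s,r_n))/\ell_n$ (converging to $f(s)$ by Lebesgue differentiation) produces a compact $A'\subset\{f<m_f+\varepsilon/2\}$ of positive measure with $\mu_f(B(s,r_n))\le(m_f+\varepsilon)\ell_n$ uniformly; no control on the midpoint is needed. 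Summing over $n\le N$ with $\ell_{N+1}<|t-s|\le\ell_N$ and using that $N\ell_N=o(\ell_1+\cdots+\ell_N)$ for $\ell_n=c/n$ (this is the paper's condition~\eqref{s2}, automatic here) converts the extra $\varepsilon\sum_{n\le N}\ell_n$ into a multiple of $\sum_n(\ell_n-|t-s|)_+$, yielding
\[
\sum_n\mu_f\bigl(B(t,r_n)\cap B(s,r_n)\bigr)\ \le\ (m_f+O(\varepsilon))\sum_n(\ell_n-|t-s|)_+
\]
for $s\in A'$ and all $t$. Plugging this into the local Billard criterion (your ``Kahane-type capacity criterion'', which the paper states and proves as Theorem~\ref{thm:B}) with $\sigma$ equal to normalised Lebesgue measure on $A'$ gives divergence of the $\Phi^{(m_f+O(\varepsilon))}$-energy of $A'$, hence by Lemma~\ref{lem-1} the Shepp series with exponent $m_f+O(\varepsilon)$ diverges, i.e.\ $c(m_f+O(\varepsilon))\ge1$; letting $\varepsilon\downarrow0$ gives $cm_f\ge1$.
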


The $\mu$-Dvoretzky covering problem is subtle
and is not treated in this paper when $\mu$ is singular. But we have the following 
local comparison principle. This principle will serve us as a tool 
in our present study and it has its own interests.

\begin{thm}\label{thm3} Consider two Dvoretzky covering respectively defined by two Borel probability measures $\mu$ and $\nu$. Assume that
	$\mu|_U \le \nu|_U$ for some non-empty open set $U \subset \mathbb{T}$. Let $K\subset U$ be a compact set in $U$. If $K$    is covered for the $\mu$-Dvoretzky covering, then it is covered for the $\nu$-Dvoretzky covering.
\end{thm}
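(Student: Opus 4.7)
The plan is to prove the statement by constructing a coupling between the two Dvoretzky processes such that, locally inside $U$, the $\mu$-centers are forced to coincide with $\nu$-centers. Since $\mu|_U\le\nu|_U$ as measures on $U$, the restriction $\mu|_U$ is absolutely continuous with respect to $\nu|_U$, and its Radon--Nikodym derivative $h := d(\mu|_U)/d(\nu|_U)$ satisfies $0\le h\le 1$ $\nu$-almost everywhere on $U$. For each $n\ge 1$ I would construct the pair $(\xi_n,\eta_n)$ as follows on an enlarged probability space: first sample $\eta_n\sim\nu$; then draw an independent Bernoulli random variable $B_n$ with success probability $h(\eta_n)\mathbf{1}_U(\eta_n)$; if $B_n=1$ set $\xi_n=\eta_n$, and if $B_n=0$ sample $\xi_n$ independently from the conditional law $\mu|_{U^c}/\mu(U^c)$ (the degenerate case $\mu(U^c)=0$ forces $h=1$ $\nu$-a.e.\ on $U$ and gives $\mu=\nu$, so there is nothing to prove). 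A direct computation shows that the marginal of $\xi_n$ is $\mu$, and the pairs $(\xi_n,\eta_n)$ are i.i.d.; so the processes $I_n^\mu=B(\xi_n,r_n)$ and $I_n^\nu=B(\eta_n,r_n)$ have exactly the laws required by the $\mu$- and $\nu$-Dvoretzky coverings.

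The crucial feature of this coupling is a dichotomy: either $B_n=1$ and then $\xi_n=\eta_n\in U$ (so $I_n^\mu=I_n^\nu$), or $B_n=0$ and then $\xi_n\in U^c$. Consequently, the event $\{\xi_n\in U\}$ implies $\{\xi_n=\eta_n\}$ deterministically on the coupling space.

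Next, I would exploit the separation between $K$ and $U^c$. Since $K\subset U$ with $K$ compact and $U$ open in $\mathbb{T}$, there exists $\delta>0$ with $K^{\delta}:=\{x\in\mathbb{T}:d(x,K)\le\delta\}\subset U$. Because $(\ell_n)$ is decreasing to some limit and, in the covering context, $\ell_n\to 0$, pick $N$ such that $r_n=\ell_n/2<\delta$ for all $n\ge N$. For any such $n$ and any $x\in K$, if $x\in I_n^\mu$ then $\xi_n\in B(x,r_n)\subset K^\delta\subset U$, so the dichotomy forces $\xi_n=\eta_n$ and hence $x\in I_n^\nu$. Therefore, on the coupling space,
\begin{equation*}
K\cap\bigcap_{m\ge N}\bigcup_{n\ge m} I_n^\mu \;\subset\; K\cap\bigcap_{m\ge N}\bigcup_{n\ge m} I_n^\nu,
\end{equation*}
i.e.\ $\{K\subset\limsup_n I_n^\mu\}\subset\{K\subset\limsup_n I_n^\nu\}$ up to a null set. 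Since the hypothesis asserts the left-hand event has probability one, so does the right-hand event, which is exactly the statement that $K$ is covered in the $\nu$-Dvoretzky covering.

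The only delicate step is verifying that the coupling really yields the stated marginals, and that the strict inclusion $K^\delta\subset U$ holds; both are routine. There is no genuine obstacle once the coupling is set up, because the assumption $\mu|_U\le\nu|_U$ is precisely what lets $h$ be built with $0\le h\le 1$. If one wished to avoid the coupling entirely, an alternative would be a Kahane-capacity argument comparing the covering kernels $\Phi_\mu$ and $\Phi_\nu$ locally, but the coupling yields the cleaner almost-sure implication.
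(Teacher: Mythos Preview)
Your argument is correct, and it rests on the same underlying observation as the paper's: once $r_n < d(K,\partial U)$, only centers falling in $U$ can contribute to the covering of $K$, and the ``$U$-part'' of the center distribution can be made common to $\mu$ and $\nu$ because $\mu|_U\le\nu|_U$. The technical packaging, however, is different. The paper does not build a single coupled space carrying both $(\xi_n)$ and $(\eta_n)$. Instead it decomposes $\mu=\mu_0+\mu_1$ and $\nu=\nu_0+\nu_1$ with the \emph{common} piece $\mu_1=\nu_1=\mu|_U$, and models each process separately on $\{0,1\}^{\mathbb N}\times\mathbb T^{\mathbb N}$ via an auxiliary Bernoulli sequence $\epsilon$ (with $P(\epsilon_n=1)=\mu(U)$ in both cases) that decides whether $\xi_n$ is drawn from the normalized $\mu_1$ or from $\mu_0$ (resp.\ $\nu_0$). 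A lemma shows that, conditionally on $\epsilon$, whether $K$ is covered depends only on the subsequence $(\xi_n)_{n:\epsilon_n=1}$; since that subsequence has the same conditional law under $P_\mu$ and $P_\nu$ (both $\widetilde{\mu}_1$-i.i.d.), the covering probability transfers, and one concludes by disintegrating over $\epsilon$.

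Your acceptance--rejection coupling is more direct: it produces an almost-sure set inclusion $\{K\subset\limsup I_n^\mu\}\subset\{K\subset\limsup I_n^\nu\}$ on a single probability space and avoids the disintegration machinery entirely. The paper's route, by contrast, isolates the ``selector'' randomness $\epsilon$ from the centers, a viewpoint that can be reused elsewhere. Both arguments tacitly need $\ell_n\to 0$ so that intervals with centers in $U^c$ eventually miss $K$; you flag this explicitly, the paper uses it without comment in its Lemma~4.2.
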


We organize the rest of the paper as follows. In Section 2, we prove that the set of essential infimum is a non-empty compact set. Section 3 contains two basic results which are respectively qualified local Billard criterion and local Kahane criterion. In Section 4, we prove the comparison principle (Theorem \ref{thm3}) which is our third basic result. After these preparations,  we  find sufficient conditions for covering $\mathbb{T}$ in Section 5, and necessary conditions for covering $\mathbb{T}$ in Section 6. Then Theorem \ref{thm1} together with its corollaries and Theorem \ref{thm_seq}
are proved in Section 7. 

\section{Set of essential infimum}\label{sect:essential-inf}
 We leave the Dvoretzky covering problem for a while. 
In this section, we study the set of points where a measurable function attains its "minimal" value.
\medskip

Let $f:\mathbb{T} \rightarrow \mathbb{R}$ be a Borel measurable function and $I \subset \mathbb{T}$ an interval. 
The {\em essential infimum} of the function $f$ on the interval $I$ is defined as follows
$$
\essinf_{I}f := \sup\{a \in \mathbb{R}: a \le f(x) \ {\rm for\ almost \ all } \ x\in I\}.
$$ 
We will denote $\essinf_{\mathbb{T}}f$ by $m_f$.
%\end{defi}
Let $x_0 \in \mathbb{T}$ be fixed. The {\em  essential infimum} at  $x_0$ of $f$ is defined  to be the following limit
$$
\underline{E}_f(x_0) : = \lim_{n \rightarrow \infty}\essinf_{B(x_0,\frac{1}{n})}f.
$$
The {\em set of essential infimum points},
denoted $K_f$, is defined by
$$
K_f	:=\{x \in \mathbb{T}: \underline{E}_f(x) =m_f\}.
$$

%Note that the limit above exists, as the sequence %$\essinf_{B(x_0,\frac{1}{n})}f$ is increasing in $n$. 
Similarly  we  define the essential suprema $\esssup_{I}f$
and $\overline{E}_f(x_0)$.
\iffalse
$$
\esssup_{I}f = \sup_{a \in \mathbb{R}}\{P(f(x) > a|x \in I)>0\},
$$ 
and respectively
$$
\underline{E}_f(x_0)= \lim_{n \rightarrow \infty}\esssup_{B(x_0,\frac{1}{n})}f.
$$
\fi
Clearly
$$
\underline{E}_f(x_0) \leq \overline{E} _f(x_0).
$$
It is also cleat that $\underline{E}_f(x_0) = \overline{E} _f(x_0)$ if $f$ is continuous at $x_0$. 
\iffalse
and since $f$ is positive, then there is equality if and only if $f$ is continuous at $x_0$ in an almost sure sense, i.e. for arbitrary $\epsilon>0$ there is $\delta>0$, so that
$$
|f(x)-f(x_0)|<\epsilon,
$$
for almost all $x \in (x_0-\delta,x_0+\delta )$.

\begin{defi}\label{essinfset}
For a non-negative function $f$ defined on $\mathbb{T}$ we denote by $K_f$ the set of all essential infimum points, i.e.
$$
K_f	=\{x \in \mathbb{T}: \underline{E}_f(x) =m_f\}.
$$ 	
\end{defi}
\fi

%We have the following proposition and the theorem.
\begin{prop}\label{prop:EI}
For every Borel measurable function $f: \mathbb{T} \to \mathbb{R}$, $K_f$ is a non-empty compact set.
\end{prop}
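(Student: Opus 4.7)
The plan is to establish the two required properties separately: $K_f$ is closed (and hence compact inside the compact space $\mathbb{T}$), and $K_f$ is non-empty.

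For closedness, I would show that the complement $\mathbb{T}\setminus K_f$ is open. Fix $x_0$ with $\underline{E}_f(x_0)>m_f$. The key observation is monotonicity: $n\mapsto \essinf_{B(x_0,1/n)}f$ is non-decreasing (a smaller ball has at least as large an essential infimum), so its limit $\underline{E}_f(x_0)$ is actually attained as a supremum. Hence there exist an integer $N$ and a real $\alpha>m_f$ with $\essinf_{B(x_0,1/N)}f>\alpha$. For every $y\in B(x_0,1/(2N))$ one has $B(y,1/(2N))\subset B(x_0,1/N)$, so $\essinf_{B(y,1/(2N))}f\ge \essinf_{B(x_0,1/N)}f>\alpha$, and therefore $\underline{E}_f(y)>\alpha>m_f$. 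This shows $B(x_0,1/(2N))\subset \mathbb{T}\setminus K_f$.

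For non-emptiness, I would argue by contradiction using the compactness of $\mathbb{T}$. Assume $K_f=\emptyset$. Then for every $x\in\mathbb{T}$ there is a radius $r_x>0$ with $\essinf_{B(x,r_x)}f>m_f$. The open cover $\{B(x,r_x)\}_{x\in\mathbb{T}}$ admits a finite subcover $B(x_1,r_1),\ldots,B(x_N,r_N)$. Setting $\alpha:=\min_{1\le i\le N}\essinf_{B(x_i,r_i)}f$, we have $\alpha>m_f$, and on each $B(x_i,r_i)$ the set $\{f<\alpha\}$ is Lebesgue-negligible. Taking a finite union, $f\ge \alpha$ almost everywhere on $\mathbb{T}$, contradicting the definition of $m_f$ as the supremum of such values.

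There is no serious obstacle here; the only point requiring care is the monotonicity statement underlying the closedness step, which is what converts the limit in the definition of $\underline{E}_f$ into a useful uniform lower bound on a small neighborhood. The finite subcover step in the non-emptiness proof is the standard compactness argument and works because essinf behaves well under finite unions (the exceptional null sets combine into a null set).
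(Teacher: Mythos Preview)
Your argument is correct. Both steps are sound: the monotonicity of $n\mapsto\essinf_{B(x_0,1/n)}f$ indeed turns the limit into a supremum, which gives the neighborhood bound needed for openness of the complement; and the finite-subcover argument for non-emptiness is a clean use of compactness of $\mathbb{T}$ together with the fact that a finite union of null sets is null.

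The paper's proof proceeds differently on both fronts. For non-emptiness it gives a \emph{constructive} dissection argument: repeatedly bisect $\mathbb{T}$, at each stage keeping the half on which $\essinf f$ equals the global $\essinf_{\mathbb{T}}f$ (using $\essinf_{I\cup J}f=\min(\essinf_I f,\essinf_J f)$), and the nested intersection yields an explicit point of $K_f$. For closedness it uses the sequential characterization directly: if $x_n\to x_0$ with $x_n\in K_f$, then any ball $B(x_0,1/n)$ contains some $x_m$, forcing $\essinf_{B(x_0,1/n)}f\le\underline{E}_f(x_m)=m_f$. Your approach is arguably more conceptual---it is essentially the observation that $\underline{E}_f$ is lower semicontinuous, from which both closedness of the minimum level set and attainment of the minimum on a compact space follow at once---while the paper's dissection actually exhibits a point of $K_f$ rather than merely asserting one exists.
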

\begin{proof}
%We first show that $K=K_f$ is not empty. 
We prove the non-emptyness of $K_f$  by a dissection argument using the fact 
$$\essinf_{I \cup J} f = \min (\essinf_I f, \essinf_J f).
$$
Indeed, cut $\mathbb{T}$ into $\mathbb{T} = I \cup J = [0,1/2) \cup [1/2, 1)$. Then we have 
$$
\essinf_{\mathbb{T}} f = \min(\essinf_I f,\essinf_J f).
$$ 
We continue this process and construct a sequence of nested intervals 
$$
I_1 \supset I_1 \supset \dots \ I_n \supset \dots,
$$
so that $I_{n+1}$ is one of the two halves of the interval $I_n$ and such that
$$
\essinf_{I_n} f = \essinf_{I_{n+1}} f=\essinf_{\mathbb{T}} f.
$$   
Since $|I_{n+1}|=|I_n|/2$, then $\bigcap_{n=1}^\infty I_n$ is a single point, say $\{x_0\}$. 
\iffalse Let
$$
\{x_0\}=\bigcap_{n=1}^\infty I_n.
$$
\fi
We claim that $x_0 \in K$. Indeed, for arbitrary $n \in \mathbb{N}$ there is $m \in \mathbb{N}$ such that
$$
B(x_0, 1/n)\supset I_m,
$$
which implies
$$
\essinf_{B(x_0, \frac{1}{n})}f\leq  \essinf_{I_m}f=\essinf_{\mathbb{T}}f.
$$
%But since $m,n$ were arbitrary, then we will have $x_0 \in K$.
Let $n\to \infty$. Thus we get $\underline{E}_f(x_0) = \essinf_{\mathbb{T}}f$. 

Now show that $K_f$ is closed. Assume $\{x_n\}_{n \geq 1}\subset K_f$ and $\lim_{n \rightarrow \infty}x_n = x_0$. 
%We want to show that $x_0 \in K$. 
For any $n \in \mathbb{N}$ there exists $m \geq 1$ such that
$$
x_m \in B(x_0, \frac{1}{n}),
$$
which implies that
$$
\essinf_{{B(x_0,\frac{1}{n})}}f\leq \underline{E}_f(x_m)= m_f.
$$
%But since $n$ was arbitrary, then
It follows that 
$
\underline{E}_f(x_0)=m_f
$, i.e. $x_0\in K_f$.
\end{proof}

We now show that the regularity condition (\ref{dom_1}) is not always fulfilled.

\medskip
Let $A \subset [0,1]$ be a set, which is constructed in the same way as the classical Cantor set, but at each step, we remove the central interval of shorter length. This results in a compact set with positive Lebesgue measure, which  contains no intervals and is nowhere dense. Consider the characteristic function of the complementary of $A$:
$$
f_A(x) = 1 - \chi_A(x), \quad x \in [0,1).
$$
Observe that
\begin{equation}\label{Cantor}
\forall x \in [0,1],\ \overline{E}_{f_A}(x) = 1;  \quad
\forall x \in A, \ \underline{E}_{f_A}(x) = 0.
\end{equation}
Then, we define the following function
$$
   f(0)= 0; \quad  \forall x \in  [2^{-n-1}, 2^{-n}), 
f(x) = 
f_A(2^{n+1}x - 1)+\frac{1}{n+1}  \ \ \ (n\ge 0).
$$
From (\ref{Cantor}), we get that for each $n\ge 0$ we have
$$
\essinf_{[\frac{1}{2^{n+1}}, \frac{1}{2^n}]}f=\frac{1}{n+1}; \quad 
\forall x \in  [2^{-n-1}, 2^{-n}),
\overline{E}_{f}(x)= 1 + \frac{1}{n+1}.
$$
Finally we get 
$$
K_f=\{0\}; \quad
\underline{E}_{f}(0) = 0;  \quad \forall x \in [0,1), \overline{E}_{f}(x) \geq 1.
$$
Thus, the Condition \ref{dom_1} is not fulfilled by this function $f$. 
However,  recall that the condition 
(\ref{dom_1}) is satisfied for a function $f$ which is continuous at one point of $K_f$.
\medskip

\section{Two basic results due to Billard and Kahane}

The proof of Theorem \ref{thm1} will be based on Theorem \ref{thm3} and on the following two criteria, which have their own interests.

\subsection{Local Billard necessary condition}
The following 
theorem gives us a necessary condition for covering the circle.
The idea of  second moment used in the proof  came from P. Billard \cite{Billard1965}. 
Therefore the condition will be refered to as (local) Billard condition.

	\begin{thm}[local Billard criterion]\label{thm:B} 
		  	Let $F \subset \mathbb{T}$ be a non-empty compact set. 
		  	Suppose that $\mu$ is a probability measure on $\mathbb{T}$ such that
		  	\begin{equation}\label{eq:cond-noncover1}
		  	       \sup_{t \in F} \sum_{n=1}^\infty \mu(B(t, r_n))^2 <\infty.
		  	\end{equation}
		  	Then $F$ is not covered for the $\mu$-Dvoretzky covering if there exists a probability measure	$\sigma$ supported by $F$ such that	
		  	\begin{equation}
		  	       \label{eq:cond-noncover2}
 \int_{F} \int_{F} \exp\sum_{n=1}^\infty \mu(B(t, r_n)\cap B(s,r_n))d\sigma(t)d\sigma(s) <\infty.
		  	\end{equation}
	\end{thm}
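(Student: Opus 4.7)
The plan is to implement the classical second-moment method of Billard, adapted to the non-uniform measure $\mu$ and localized to $F$. The strategy produces, for each $n$, a non-negative random variable $Y_n$ with $E[Y_n]=1$ and $\sup_n E[Y_n^2]<\infty$, from which the Paley--Zygmund inequality yields a uniform lower bound on $P(Y_n>0)$, and then a compactness argument converts this into the non-coverage of $F$.

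First I would define
$$
W_n(t)\;:=\;\prod_{k=1}^{n}\frac{\mathbbm{1}_{\{t\notin I_k\}}}{1-\mu(B(t,r_k))},\qquad Y_n\;:=\;\int_F W_n(t)\,d\sigma(t),
$$
where $\sigma$ is the measure furnished by the hypothesis. Since $P(t\in I_k)=\mu(B(t,r_k))$, independence of the $\xi_k$ together with Fubini gives $E[Y_n]=1$. A second application of Fubini and independence yields
$$
E[Y_n^2]\;=\;\int_F\!\int_F\prod_{k=1}^{n} R_k(t,s)\,d\sigma(t)\,d\sigma(s),\qquad R_k(t,s)\,=\,\frac{1-\mu(B(t,r_k)\cup B(s,r_k))}{(1-\mu(B(t,r_k)))(1-\mu(B(s,r_k)))}.
$$
Inclusion--exclusion expresses $R_k$ in terms of $a_k=\mu(B(t,r_k))$, $b_k=\mu(B(s,r_k))$ and $c_k=\mu(B(t,r_k)\cap B(s,r_k))$; a Taylor expansion of $\log$ then gives, once $a_k,b_k$ are sufficiently small,
$$
\log R_k(t,s)\;\le\;c_k+C_0\bigl(a_k^{2}+b_k^{2}\bigr).
$$
Because $r_k\to 0$ and $\sup_{t\in F}\sum_k a_k^{2}<\infty$, the measures $a_k,b_k$ become uniformly small in $(t,s)\in F\times F$ past some index $k_0$, and the finitely many initial terms contribute a bounded multiplicative factor. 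Summing and integrating, the key hypotheses \eqref{eq:cond-noncover1} and \eqref{eq:cond-noncover2} combine to yield
$$
E[Y_n^2]\;\le\;M\int_F\!\int_F\exp\sum_{k=1}^{\infty}\mu(B(t,r_k)\cap B(s,r_k))\,d\sigma(t)\,d\sigma(s)\;=:\;M'\;<\;\infty
$$
uniformly in $n$.

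The Paley--Zygmund inequality then gives $P(Y_n>0)\ge 1/M'>0$ for every $n$. Now observe that $Y_n>0$ forces $\sigma(S_n)>0$, where $S_n=\{t\in F:t\notin I_k \text{ for } 1\le k\le n\}$. In particular $S_n\ne\emptyset$, and since the $S_n$ are closed subsets of the compact set $F$ decreasing in $n$, the finite intersection property gives $\bigcap_n S_n\ne\emptyset$ on the event $\bigcap_n\{S_n\ne\emptyset\}$. Hence
$$
P\bigl(\exists\,t\in F:\ t\notin I_k\ \forall k\bigr)\;\ge\;\lim_n P(S_n\ne\emptyset)\;\ge\;\frac{1}{M'}\;>\;0,
$$
which in particular gives $P(F\not\subset\limsup_n I_n)>0$. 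The latter event is a tail event with respect to the sequence $(\xi_k)$, so by Kolmogorov's $0$--$1$ law it has probability $1$, i.e.\ $F$ is not covered.

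The main obstacle is the key estimate $\log R_k\le c_k+C_0(a_k^{2}+b_k^{2})$ with the correct constant in front of $c_k$: anything larger than $1$ would destroy the match with the kernel appearing in \eqref{eq:cond-noncover2}. Getting constant $1$ requires writing $\log R_k$ as a difference of three logarithms and keeping track of the linear cancellation between $\log(1-a_k-b_k+c_k)$ and $-\log(1-a_k)-\log(1-b_k)$; the quadratic remainders are then precisely what the Billard condition \eqref{eq:cond-noncover1} is designed to control. The transition between the pointwise Taylor bound and the required uniformity in $(t,s)\in F\times F$ is handled via $r_k\to 0$ together with compactness of $F$.
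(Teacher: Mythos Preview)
Your proof is correct and follows essentially the same second-moment (Billard) approach as the paper: the same product $W_n=Q_n$, the same Fubini computation of $E[Y_n^2]$, and the same Taylor estimate $\log R_k = c_k + O(a_k^2+b_k^2)$ controlled by \eqref{eq:cond-noncover1}. The only cosmetic difference is in the endgame: the paper notes that $(Y_n)$ is an $L^2$-bounded martingale, hence its limit has mean $1$ and is not a.s.\ zero, while you reach the same conclusion via Paley--Zygmund together with the explicit compactness and $0$--$1$ argument.
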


\begin{proof} 
	 Consider the martingale
$$
M_n = \int_{F} Q_n(t) d\sigma(t)
$$
where
$$
Q_n(t) = \prod_{j=1}^n \frac{1- \chi_{B(\xi_n, r_n)}(t)}{1 - \mu(B(t, r_n))}.
$$
Notice that $Q_n(t) =0$ means that $t$ is covered by one of the intervals $I_j$ ($1\le j \le n$). If $M_n$ doesn't tend to zero, then some point in $F$ is not covered. 
By Fubini theorem and simple computations, we get
$$
\mathbb{E} M_n^2 = 
\int_{F}  \int_{F}  \prod_{j=1}^n \frac{1-\mu(B(t, r_n) - \mu(B(s, r_n)) + \mu (B(t, r_n) \cap B(s, r_n))}{(1-\mu(B(t, r_n))(1- \mu(B(s, r_n)))}dtds.
$$
Using $1-x = e^{-x + O(x^2)}$ and the conditions (\ref{eq:cond-noncover1}) and (\ref{eq:cond-noncover2}), we get $\mathbb{E}M_n^2 = O(1)$ which implies that the limit of $M_n$ is not almost surely zero.    
\end{proof}

The condition (\ref{eq:cond-noncover2}) means that $F$ has a positive capacity. 
The condition (\ref{eq:cond-noncover1}) can be relaxed to the pointwise finiteness, because $F$ can be approximated by
compact sets on each of which the sum is uniformly bounded.

\subsection{Local Kahane criterion}
The next result can be considered as a local version of Kahane's theorem. For $a>0$, define the kernal
$$
     \Phi^{(a)}(t, s) := \exp{\left( a \sum_{k=1}^\infty (l_k - |t-s|)_+\right)}.
$$
As we see below, this criterion is not perfect because we need the assumption that the density is constant around the set to be covered  in question. But it will be one of our basic tools, because we can approximate our density function by functions which are locally constant.

\begin{thm}[local Kahane criterion]\label{thm:K} \label{kh-loc}Consider a probability measure $\mu_f$
	having its density function $f$. 
Let $I \subset \mathbb{T}$ be an open interval. Suppose that $f(x)=a$ for almost all $x \in I$. Then 
a compact subset  $F$ of $I$ is covered for the $\mu_f$-Dvoretzky covering if and only if ${\rm Cap}_{\Phi^{(a)}}(F) =0$.
\iffalse
 for arbitrary measure $\sigma$ supported on $F$ we have
\begin{equation}\label{kah_thm}
\int_F \int_F \exp{\left( a \sum_{k=1}^\infty (l_k - |t-s|)_+\right)}d\sigma(s)d\sigma(t)=\infty.
\end{equation}
\fi
\end{thm}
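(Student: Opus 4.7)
Both directions will rest on the following local identity: since $F$ is a compact subset of the open interval $I$ on which $f \equiv a$ almost everywhere, setting $\delta := \mathrm{dist}(F, \partial I) > 0$, for every $n$ with $\ell_n < \delta$ and every $t, s \in F$ both balls $B(t, r_n)$ and $B(s, r_n)$ lie inside $I$, and therefore
\[
\mu_f(B(t, r_n)) = a\ell_n, \qquad \mu_f(B(t, r_n) \cap B(s, r_n)) = a(\ell_n - |t-s|)_+.
\]
In particular, $\sum_n \mu_f(B(t, r_n) \cap B(s, r_n)) = a\sum_n (\ell_n - |t-s|)_+$ up to a bounded remainder contributed by the finitely many initial terms.

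For the necessary direction, the plan is to apply Theorem~\ref{thm:B} directly. Given $\mathrm{Cap}_{\Phi^{(a)}}(F) > 0$, I would pick a witness probability $\sigma$ on $F$ with $\iint \Phi^{(a)}\, d\sigma\, d\sigma < \infty$. By the local identity this $\sigma$ satisfies condition \eqref{eq:cond-noncover2}, and the pointwise relaxation of \eqref{eq:cond-noncover1} follows because finiteness of the $\Phi^{(a)}$-energy of $\sigma$ forces $\sum \ell_n^2 < \infty$ (otherwise the diagonal singularity of $\Phi^{(a)}$ is not integrable against any non-atomic probability on $F$). Theorem~\ref{thm:B} would then imply that $F$ is not covered.

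For the sufficient direction, the plan is to adapt Kahane's classical proof of~\eqref{cond:Kahane} to the present localized setting. I would fix $n_0$ large enough that the local identity holds for all $k \geq n_0$ and $t, s \in F$, and for $t \in F$ introduce the positive martingale
\[
Q_n(t) := \prod_{k = n_0}^n \frac{1 - \chi_{B(\xi_k, r_k)}(t)}{1 - a\ell_k}, \qquad \mathbb{E}\, Q_n(t) = 1.
\]
A direct computation using the local identity gives
\[
\mathbb{E}[Q_n(t)\, Q_n(s)] \asymp \exp \sum_{k = n_0}^n a(\ell_k - |t-s|)_+ \asymp \Phi^{(a)}(t,s),
\]
with implicit constants depending only on $\sum_k \ell_k^2$ and on the finitely many boundary terms. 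Kahane's construction would then produce a random Borel measure $\nu$ on the uncovered set $R = \mathbb{T} \setminus \limsup_n I_n$ as a weak limit of $Q_n(t)\, d\sigma(t)$ for $\sigma$ a witness to positive capacity; if $\mathbb{P}(F \cap R \neq \emptyset) > 0$, one would extract a non-trivial deterministic measure on $F$ with finite $\Phi^{(a)}$-energy, contradicting $\mathrm{Cap}_{\Phi^{(a)}}(F) = 0$. The main obstacle is checking that Kahane's multiplicative-chaos / capacity argument, designed for uniform centers on all of $\mathbb{T}$, transfers faithfully to the present setup where the uniform density only holds on $I$: the finitely many boundary terms must contribute only bounded multiplicative factors, and the random measure construction must be carried out so as to land on $F \cap R$ rather than on all of $R$.
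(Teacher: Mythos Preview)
Your necessary direction via Theorem~\ref{thm:B} is essentially correct, though the claim that finite $\Phi^{(a)}$-energy forces $\sum \ell_n^2 < \infty$ is not as immediate as ``diagonal singularity'': the clean justification goes through Lemma~\ref{2-summable} combined with the classical Kahane theorem (if $\sum \ell_n^2 = \infty$ then every compact set has zero $\Phi^{(a)}$-capacity, contradicting the hypothesis). Once that is granted, the Billard argument runs.

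Your sufficient direction, however, has the genuine gap you yourself flag. Redoing Kahane's proof is substantial: the hard half of Kahane's theorem is precisely the construction, from the event $\{F \cap R \neq \emptyset\}$, of a \emph{deterministic} probability on $F$ with finite energy, and this requires a non-trivial potential-theoretic step that your outline does not supply. The phrasing is also slightly tangled: you mention ``$\sigma$ a witness to positive capacity'', but in this direction no such $\sigma$ is assumed to exist---the whole point is to manufacture one.

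The paper bypasses all of this with a coupling. Let $M(x) = \int_0^x f$ be the distribution function and realise the $\mu_f$-covering as $\xi_n = M^{-1}(\omega_n)$ with $(\omega_n)$ i.i.d.\ uniform on $[0,1)$. Because $f \equiv a$ on $I$, the map $M$ is \emph{affine with slope $a$} on $I$: it carries each interval $I_n(\xi_n) \subset I$ to an interval of length $a\ell_n$ centred at $\omega_n$. Hence $F \subset I$ is $\mu_f$-covered by lengths $\{\ell_n\}$ if and only if $M(F)$ is classically covered by lengths $\{a\ell_n\}$, and Kahane's theorem applies as a black box; affinity of $M$ transfers the capacity condition verbatim. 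This single reduction handles both directions at once and never reopens Kahane's proof---a much shorter route than adapting the multiplicative-chaos machinery.
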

\begin{proof} The idea of proof is to compare the $\mu_f$-Dvoretzky
	covering with a classic Dvoretzky covering. 	
Let $M$ be the distribution function of $\mu_f$, i.e.
$$
     M(x) = \mu_f([0,x]) = \int_0^{x} f(t) dt \quad (0\le x\le 1).
$$
Let $X$ be a random variable which is uniformly distributed on $[0,1)$. Define
$$
           Y= M^{-1}(X)
$$ 
where the inverse function is defined by
$$
      M^{-1}(x) = \inf \{t\in [0,1]: M(t) \ge x\}.
$$
It is well known that the probability law of $Y$ is $\mu_f$. %\cite{}. 
So, take a sequence $(\omega_n)$ of i.i.d. random variables uniformly distributed on $[0,1)$ and use $(\xi_n)$ to model a $\mu_f$-Dvoretzky
covering, where $\xi_n = M^{-1}(\omega_n)$.
 
 Observe that the restriction $M: I \to M(I)$ is affine and invertible. It follows that  for any interval  $J \subseteq I$,  its image $M(J)$ is an interval of length 
$$|M(J)|=\int_J f(t)dt=|J|a$$ 
and the center of $M(J)$ is  the image of the center of $J$. 
%Let $X_n = M(\xi_n)$. 
So,  if $I_n = (\xi_n-\ell_n/2, \xi_n +\ell_n/2) \subset I$, then   $M(I_n)$ is centered at $(\omega_n)$ and of length  $a\ell_n$. 

Now choose a proper subset $J \subset I$ such that $F \subset J$. Then choose $N \in \mathbb{N}$ so large that $\ell_n \leq {\rm dist}\{J, \partial I\}$ for all $n \geq N$.  
Assume $F \subset \limsup I_n(\xi_n)$. Then for any $x\in F$,
$x\in I_n(\xi_n)$ for an infinite number of $n's$ with $n\ge N$ (these $n$'s depend on $x$). These $\xi_n$'s must fall into the interval $J$.   For such $n$, $M(x) \in M_n(I_n(\xi_n))$. 
It follows that 
$$
P(F \subset \limsup_{n \rightarrow \infty} I_n)=1
\Longrightarrow
P(M(F) \subset \limsup_{n \rightarrow \infty} M(I_n))=1.
$$
The converse implication can be similarly proved, because $M: I \to M(I)$
is invertible. 
In other words,  $F \subset J$ is covered for the $\mu_f$-Dvoretzky coverring   by intervals of length $\{\ell_n\}_{n \geq 1}$  if and only if $M(F)$ is covered for the classic Dvoretzky covering by intervals of length $\{a \ell_n\}_{n \geq 1}$.  But, by Kahane's theorem, the set $M(F)$ is covered for the classic Dvoretzky covering if and only if 
${\rm Cap}_{\Phi^{(a)}}(M(F)) =0$, which is equivalent to 
${\rm Cap}_{\Phi^{(a)}}(F) =0$, because $M$ is affine around $F$.
\end{proof}
	
\subsection{Two elementary facts}
%The following lemmas will be useful later. 
Recall that we always assume that $(\ell_n)$ is decreasing.
The following fact is known when $F=\mathbb{T}$. The general case is not trivial.
The proof given below involves both Shepp's condition and Kahane's condition. 
	
	\begin{lem}\label{lem-1}
Let $F$ be a compact set with $|F|>0$ and $a>0$ a positive number.
% and $\{\ell_n\}_{n \geq 1}$ be a sequence, 
Then the following two conditions are equivalent:
\begin{equation}\label{subs}
{\rm Cap}_{\Phi^{(a)}}(F)=0,
\end{equation}
%and
\begin{equation}\label{whole}
\sum_{n=1}^\infty \frac{1}{n^2} \exp(a(\ell_1+\cdots +\ell_n))= \infty.
\end{equation}
\end{lem}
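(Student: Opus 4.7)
The plan is to reduce the equivalence to Kahane's theorem, Shepp's theorem, and a translation-invariance argument. In three steps: (i) via Theorem~\ref{kh-loc} and its proof, I connect ${\rm Cap}_{\Phi^{(a)}}(F) = 0$ with the almost-sure covering of an affinely-rescaled set $M(F) \subset \mathbb{T}$ under the classical Dvoretzky covering with lengths $\{a\ell_n\}$; (ii) I show that when $|M(F)| > 0$, covering of $M(F)$ is equivalent to covering of the full circle $\mathbb{T}$; (iii) I apply Shepp's theorem to identify covering of $\mathbb{T}$ with lengths $\{a\ell_n\}$ as condition~(\ref{whole}).

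For step (i), I choose a probability density $f$ on $\mathbb{T}$ equal to $a$ on an open interval $U \supset F$ (possibly partitioning $F$ into finitely many pieces $F_i \subset J_i$ with $a|J_i| < 1$ and using the fact that ${\rm Cap}_{\Phi^{(a)}}(F) = 0$ iff ${\rm Cap}_{\Phi^{(a)}}(F_i) = 0$ for each $i$). The distribution function $M$ is affine with slope $a$ on $U$, so by Theorem~\ref{kh-loc} and the argument in its proof, ${\rm Cap}_{\Phi^{(a)}}(F) = 0$ iff $F$ is covered in the $\mu_f$-Dvoretzky covering iff $M(F)$ is covered in the classical Dvoretzky covering with lengths $\{a\ell_n\}$, with $|M(F)| = a|F| > 0$. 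Step (iii) is immediate from Shepp's theorem.

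The key step is (ii). Let $N := \mathbb{T} \setminus \limsup_n I_n$ be the random not-covered set in the classical Dvoretzky covering with lengths $\{a\ell_n\}$. If $M(F)$ is a.s.\ covered, then by translation invariance of the distribution of the centers, each shifted set $M(F) + t$ is also a.s.\ covered, so $(M(F) + t) \cap N = \emptyset$ a.s.\ for every $t \in \mathbb{T}$. Equivalently, $P(t \in N - M(F)) = 0$ for every $t$, and by Fubini $\mathbb{E}[|N - M(F)|] = 0$, giving $|N - M(F)| = 0$ almost surely. If $N$ were nonempty and $x_0 \in N$, then $N - M(F) \supset x_0 - M(F)$ would have Lebesgue measure $|M(F)| > 0$, a contradiction; hence $N = \emptyset$ a.s., i.e., $\mathbb{T}$ is covered. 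The main obstacle is precisely this step, which crucially uses $|F| > 0$ via the comparison $|N - M(F)| \ge |M(F)|$; the technicalities surrounding the choice of $f$ in step (i) (ensuring $a|U| \le 1$ and $U$ is an interval) are routine.
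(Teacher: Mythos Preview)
Your proof is correct. The implication (\ref{whole}) $\Rightarrow$ (\ref{subs}) is essentially the paper's argument: Shepp for lengths $\{a\ell_n\}$ gives covering of $\mathbb{T}$, hence of the rescaled set, and Kahane's theorem then yields ${\rm Cap}_{\Phi^{(a)}}(F)=0$ after the affine change of variables (your map $M$).

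Where you genuinely differ is in (\ref{subs}) $\Rightarrow$ (\ref{whole}). The paper does not pass through covering at all for this direction: it simply tests capacity zero against normalized Lebesgue measure on $F$ (this is where $|F|>0$ enters), obtaining $\int_F\int_F \Phi^{(a)}(t,s)\,dt\,ds=\infty$, and then bounds this by a constant times $\int_{\mathbb{T}}\int_{\mathbb{T}}\exp\sum_k(a\ell_k-|u-v|)_+\,du\,dv$ via a change of variables, the divergence of the latter double integral being a known equivalent of (\ref{whole}). Your route instead invokes Theorem~\ref{kh-loc} to turn (\ref{subs}) into almost-sure covering of a positive-measure set $M(F)$ in the classical model with lengths $\{a\ell_n\}$, then promotes this to covering of the whole circle by the translation--Fubini argument, and finally applies Shepp.

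The paper's route is shorter and self-contained (no appeal to Theorem~\ref{kh-loc}), amounting to a single integral inequality. Your route is a bit longer but makes the role of the hypothesis $|F|>0$ fully transparent---it is exactly what forces $|N-M(F)|\ge |x_0-M(F)|=|M(F)|>0$---and the intermediate statement ``a positive-measure set is a.s.\ covered in classical Dvoretzky iff the whole circle is'' is a pleasant fact in its own right. The partitioning in your step (i) to arrange $a|J_i|<1$ is harmless: capacity zero is stable under finite unions, only one piece with $|F_i|>0$ is needed for the forward direction, and the backward direction needs no positivity at all.
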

\begin{proof}
A simple calculation shows that there exists a constant $C_a>0$
such that
$$
\int_F \int_F \exp{\left( a \sum_{k=1}^\infty (\ell_k - |t-s|)_+\right)}ds dt 
\leq C_a
\int_\mathbb{T} \int_\mathbb{T} \exp{\left( \sum_{k=1}^\infty (a \ell_k - |u-v|)_+\right)}du dv.
$$
(First replace $F$ by $\mathbb{T}$ and then make a change of variable). Then (\ref{whole}) is implied by (\ref{subs}) because the last double integral equals to the infinity if and only if 
(\ref{whole}) holds.
\iffalse
While the last expression is known to be equivalent to the following condition
$$
\sum_{n=1}^\infty \frac{1}{n^2} \exp(a(\ell_1+\cdots +\ell_n))= \infty.
$$
\fi

Now assume (\ref{whole}). 
It is nothing but Shepp's condition for the classic 
Dvoretzky covering with the sequence of lengths $\{a\ell_n\}_{n \geq 1}$. Then any non-empty compact set $K' \subset \mathbb{T}$
is covered.  If $F'$ has positive Lebesgue measure, it supports  the restriction of Lebesgue measure. Thus, by Kahane's condition, we have
\begin{equation*}
\int_{F'} \int_{F'} \exp{\left( \sum_{k=1}^\infty (a \ell_k - |u-v|)_+\right)}du dv=\infty.
\end{equation*}
Take $F'=aF$, where $aF$ is the scaling of $F$ by a coefficient $a$. 
%Then we have 
%$$
%\int_{aF} \int_{aF} \exp{\left( \sum_{k=1}^\infty (a \ell_k - %|t-s|)_+\right)}ds dt=\infty, 
%$$
Then, by making a change of variable,  we get 
\begin{equation*}
\int_{F} \int_{F} \exp{\left( \sum_{k=1}^\infty a( \ell_k - |t-s|)_+\right)}dt ds=\infty. 
\end{equation*}

\end{proof}

The following fact shows that $\sum_{n=1}^\infty \ell_n^2 =\infty$ is a strong condition for the Dvoretzky covering problem.

\begin{lem}\label{2-summable} %Assume that $(\ell_n)$ is decreasing. Then
	The condition
	$
	\sum_{n=1}^\infty \ell_k^2 = \infty
	$
	implies 
	$$
	\forall a>0, \ \ \ \sum_{n=1}^\infty \frac{1}{n^2} \exp(a(\ell_1+\cdots +\ell_n))= \infty.
	$$
\end{lem}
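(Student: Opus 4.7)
The plan is to argue by contrapositive: assume that there exists some $a > 0$ for which $\sum_{n=1}^\infty \frac{1}{n^2}\exp(a(\ell_1+\cdots+\ell_n)) < \infty$, and deduce that $\sum_{n=1}^\infty \ell_n^2 < \infty$. Write $s_n = \ell_1 + \cdots + \ell_n$.

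Since a convergent series of non-negative terms must have terms tending to zero, in particular the terms $\frac{1}{n^2}e^{as_n}$ are bounded by some constant $M > 0$. Taking logarithms yields the crude but crucial estimate
\[
s_n \le \frac{2\log n + \log M}{a} = O(\log n).
\]
Here I exploit only boundedness of the terms, not their summability; this is enough because the argument below shows any unbounded subsequence of $\frac{1}{n^2}e^{as_n}$ would already give $\sum \frac{1}{n^2}e^{as_n} = \infty$.

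The second ingredient is monotonicity of $(\ell_n)$. Since $\ell_k \ge \ell_n$ for every $k \le n$, one has $s_n \ge n\ell_n$, i.e.\ $\ell_n \le s_n/n$. Combining with the bound above gives $\ell_n = O\bigl(\frac{\log n}{n}\bigr)$, whence $\ell_n^2 = O\bigl(\frac{(\log n)^2}{n^2}\bigr)$, which is summable. This is precisely the contrapositive of the lemma.

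I do not foresee any real obstacle: the only non-trivial point is to notice that the monotonicity assumption on $(\ell_n)$ converts a bound on the partial sum $s_n$ into a pointwise bound on $\ell_n$. The inequality $\ell_n \le s_n/n$ and the elementary implication ``bounded terms of $\frac{1}{n^2}e^{as_n}$ force $s_n = O(\log n)$'' together give the result in one short step, with no need to quantify the rate of divergence of $\sum \ell_n^2$.
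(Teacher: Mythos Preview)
Your proof is correct and is essentially the contrapositive of the paper's argument: both hinge on the monotonicity inequality $n\ell_n \le s_n$ together with the observation that the general term $\frac{1}{n^2}e^{as_n}$ being bounded forces $s_n = O(\log n)$. The paper runs this directly (if $\sum \ell_n^2=\infty$ then $\ell_n > n^{-2/3}$ infinitely often, so $s_n > n^{1/3}$ for those $n$ and the general term fails to vanish), while you run it in reverse and obtain the slightly sharper bound $\ell_n = O\bigl(\tfrac{\log n}{n}\bigr)$; the underlying idea is the same.
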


\begin{proof}%\label{bound}
This is known, however we will include a proof for completeness.	The condition
	$
	\sum_{n=1}^\infty \ell_n^2=\infty
	$
	implies that 	$
	\ell_n > \frac{1}{n^{2/3}}
	$ holds for infinitely many $n$. Then, for such a $n$, by the monotonicity of $(\ell_n)$ we have
	$$
	\sum_{k=1}^n \ell_k > \frac{n}{n^{2/3}}>n^{1/3}.
	$$
	Therefore the general term of the series in question doesn't tend to zero. So, the series diverges. 
	%then the series in Corollary \ref{c-2} can not be convergent.
\end{proof}

	 \section{A comparison principle: Proof of  Theorem \ref{thm3}}
	 
	 Intuitively, if $\mu|_U\le \nu|_U$, then sets contained in an open set
	 $U$ are easierly covered for the $\nu$-Dvoretzky covering than 
	 for the $\mu$-Dvoretzky covering, because there are more chances
	 to get points in $U$ for the $\nu$-Dvoretzky model.
	 We give here a rigourous proof of this intuition which is stated
	 as Theorem \ref{thm3}. The proof benefits from our discussion with Meng Wu.   
	 
	 \subsection{Model of $\mu$-Dvoretzky covering}
	 
Let $\mu=\mu_0 + \mu_1$ be a decomposition of a probability measure $\mu$ on $\mathbb{T}$. Then we set
$$
\widetilde{\mu}_0 := \frac{\mu_0}{\mu_0(\mathbb{T})}, \quad 
\widetilde{\mu}_1 := \frac{\mu_1}{\mu_1(\mathbb{T})}.
$$
For simplicity, we write $\alpha_0= \mu_0(\mathbb{T})$ and $\alpha_1= \mu_1(\mathbb{T})$, so that $\mu = \alpha_0  \widetilde{\mu}_0 
+  \alpha_1  \widetilde{\mu}_1$.

Let $\Omega= \{0,1\}^\mathbb{N} \times \mathbb{T}^\mathbb{N}$. A point in $\Omega$  is denoted by
$$
\omega := (\epsilon; \xi):= (\epsilon_1, \cdots, \epsilon_n, \cdots; \xi_1,\cdots, \xi_n, \cdots) \in \Omega.
$$
Using the decomposition $\mu = \alpha_0  \widetilde{\mu}_0 
+  \alpha_1  \widetilde{\mu}_1$, we define a Borel probability measure $P$ on $\Omega$ as follows. 
For any integer $n\ge 1$, any $(a_1, \cdots, a_n) \in \{0, 1\}^n$
and any $(A_1, \cdots, A_n) \in \mathcal{B}(\mathbb{T})^n$, define
\begin{equation}\label{def:P}
P(\epsilon_1=a_1, \cdots, \epsilon_n =a_n; \xi_1\in A_1, \cdots, \xi_n \in A_n)
:= \prod_{j=1}^n \alpha_{a_j} \widetilde{\mu}_{a_j}(A_j)
= \prod_{j=1}^n \mu_{a_j}(A_j).
\end{equation}
Notice that $P$ is not exactly a tensor product. But it has a nice 
desintegration as we show below.

Let $\pi$ be the 
$(\alpha_0, \alpha_1)$-Bernoulli measure on  $\{0,1\}^\mathbb{N}$.
For $\epsilon$ fixed, 
let
$$
\Lambda^{(\epsilon)} =\{n \in \mathbb{N}: \epsilon_n =1\},
$$  
%	This is a random subset of $\mathbb{N}$ selected by the Bernoulli varables $(\epsilon)_{n\ge 1}$. 
and we define two measures respectively 
on $\mathbb{T}^{\Lambda^{(\epsilon)}}$ and  $\mathbb{T}^{\mathbb{N}\setminus \Lambda^{(\epsilon)}}$ where
$$
P_{\Lambda^{(\epsilon)}} = \bigotimes_{n \in \Lambda^{(\epsilon)}} \widetilde{\mu}_{1}, 
\quad 
P_{\mathbb{N}\setminus\Lambda^{(\epsilon)}} = \bigotimes_{n \in \mathbb{N}\setminus \Lambda^{(\epsilon)}} \widetilde{\mu}_{0},
$$
Let $P^{(\epsilon)}=  P_{\Lambda^{(\epsilon)}} \otimes P_{\mathbb{N}\setminus\Lambda^{(\epsilon)}}$. Notice that 
$P^{(\epsilon)}$ is a version of the conditional probability
$P(\cdot | \epsilon)$.

%	But it is easily checked that
\begin{lem}\label{lem:Lem1} The measure $P$ has the following desintegration
	\begin{equation}\label{eq:desint}
	P = 
	\int P^{\epsilon} d\pi(\epsilon) =\int P_{\Lambda^{(\epsilon)}} \otimes 
	P_{\mathbb{N}\setminus\Lambda^{(\epsilon)}} d \pi(\epsilon).
	\end{equation}
	Consequently, the following statements hold:\\
	\indent {\rm (1)} All the random vectors $(\epsilon_j, \xi_j)$ ($j =1,2,\cdots$) are i.i.d..\\
	% such that
	%$$
	%  \forall j\ge 1, \forall a\in \{0,1\}, \forall A\in %\mathcal{B}(\mathbb{T}), \ \ \ 
	%       P(\epsilon_j =a, \xi_j \in A) = \mu_a(A).
	%$$
	\indent {\rm (2)} Each $\epsilon_j$ obeys the $(\alpha_0, \alpha_1)$-Bernoulli law;\\
	\indent {\rm (3)} Each $\xi_j$ admits $\mu$ as probability law;\\
	%\indent {\rm (4)} Let $A_1, \cdots, A_n$ be Borel sets in %$\mathbb{T}$. We have
	%$$
	%     P(A_1 \times \cdots \times A_n|\epsilon) = \prod_{j=1}^n %\widetilde{\mu_{a_j}}(A_j).
	%$$
	\indent {\rm (4)} 
	For  $\epsilon$ fixed, with respect to $P^{(\epsilon)}$, $\{\xi_n\}_{n\in \Lambda^{(\epsilon)}}$ are i.i.d. random variables with probability law $\widetilde{\mu}_1$ and $\{\xi_n\}_{n\in \N\setminus\Lambda^{(\epsilon)}}$ are i.i.d. random variables  with probability law $\widetilde{\mu}_0$.
\end{lem}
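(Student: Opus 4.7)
The plan is to derive all four (really five, counting the disintegration) conclusions from a single identity: that $P$ and the right-hand side of \eqref{eq:desint} agree on the $\pi$-system of finite cylinders of the form
$$
C=\{\epsilon_1=a_1,\ldots,\epsilon_n=a_n;\ \xi_1\in A_1,\ldots,\xi_n\in A_n\},
$$
for $n\ge 1$, $a_j\in\{0,1\}$, $A_j\in\mathcal{B}(\mathbb{T})$. Since this $\pi$-system generates $\mathcal{B}(\Omega)$ and $P$ itself is well defined on $\mathcal{B}(\Omega)$ by Kolmogorov's extension applied to the consistent family \eqref{def:P}, Dynkin's $\pi$-$\lambda$ theorem will reduce the disintegration \eqref{eq:desint} to this single cylinder check.

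For the cylinder identity itself, I would fix $C$ as above and observe that, for any $\epsilon$ satisfying $\epsilon_j=a_j$ for $j\le n$, the intersection $\Lambda^{(\epsilon)}\cap\{1,\ldots,n\}$ is completely determined by the $a_j$. The product structure of $P^{(\epsilon)}=P_{\Lambda^{(\epsilon)}}\otimes P_{\mathbb{N}\setminus\Lambda^{(\epsilon)}}$ then gives
$$
P^{(\epsilon)}(\xi_1\in A_1,\ldots,\xi_n\in A_n)=\prod_{j=1}^n\widetilde{\mu}_{a_j}(A_j),
$$
independently of the coordinates of $\epsilon$ beyond index $n$. Integrating against $\pi$ produces the extra factor $\prod_{j\le n}\alpha_{a_j}$, and using $\alpha_{a}\widetilde{\mu}_{a}(A)=\mu_a(A)$ recovers $\prod_{j=1}^n\mu_{a_j}(A_j)=P(C)$ via \eqref{def:P}, as desired.

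Once \eqref{eq:desint} is in hand, the consequences (1)--(4) fall out directly from \eqref{def:P}. The factorisation $P(C)=\prod_{j=1}^n\mu_{a_j}(A_j)$, each factor depending only on $(a_j,A_j)$, is precisely the statement that the random vectors $(\epsilon_j,\xi_j)$ are i.i.d.\ with common law $(a,A)\mapsto\mu_a(A)$, proving (1). Marginalising gives $P(\epsilon_j=a)=\mu_a(\mathbb{T})=\alpha_a$ and $P(\xi_j\in A)=\alpha_0\widetilde{\mu}_0(A)+\alpha_1\widetilde{\mu}_1(A)=\mu(A)$, proving (2) and (3). Finally, (4) is simply a rereading of the defining product structure of $P^{(\epsilon)}$, whose status as the conditional law $P(\,\cdot\mid\epsilon\,)$ is exactly what \eqref{eq:desint} asserts. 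The only technical point that will require a moment of care is the measurability of $\epsilon\mapsto P^{(\epsilon)}(B)$ for Borel $B\subset\mathbb{T}^{\mathbb{N}}$, so that the integral in \eqref{eq:desint} makes sense; this reduces by a routine monotone class argument to the obvious case where $B$ is cylindrical, and is really the only place the proof needs any verification beyond the bookkeeping above.
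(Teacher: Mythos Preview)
Your proposal is correct and follows essentially the same approach as the paper: verify the identity on finite cylinders using \eqref{def:P}, then read off (1)--(4) from the resulting product formula and the definition of $P^{(\epsilon)}$. The paper's own proof is in fact much terser---it simply says that the disintegration \eqref{eq:desint} ``is nothing but the definition \eqref{def:P}'' and that the other properties follow---so your write-up, with the explicit $\pi$-$\lambda$ reduction and the remark on measurability of $\epsilon\mapsto P^{(\epsilon)}(B)$, supplies details the paper leaves implicit.
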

\begin{proof}
	Using the fact
	$
	{\mu}_{0}+  {\mu}_{1}
	= \mu
	$, we check that the measure $P$ is well defined.  The desintegration (\ref{eq:desint}) is nothing but the definition (\ref{def:P}). It follows the other properties.
\end{proof}

Since $(\xi_n)$ is a sequence of i.i.d. random variables with $\mu$
as probability law, the random intervals
$I_n(\xi_n) = (\xi_n - \ell_n/2, \xi_n + \ell_n/2)$  define a model of $\mu$-covering. Recall that $\mu = \mu_0 + \mu_1$

\begin{lem}\label{lemma comparison1}
	Let $U \subset \mathbb{T}$ be a non-empty open interval.
	Suppose ${\mu}_0(U)=0$. Then for any compact set  $K\subset U$, for $P$-a.e. $(\epsilon,\xi)\in \Omega$, we have
	\begin{equation}\label{eq. lem comp1}
	K\subset \limsup_{n\in \mathbb{N}} I_n(\xi_n) \Longleftrightarrow  K \subset \limsup_{n\in \Lambda^{(\epsilon)}} I_n(\xi_n).
	\end{equation}
\end{lem}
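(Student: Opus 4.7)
The plan is to exploit $\mu_0(U)=0$: this forces $\xi_n\notin U$ for every $n\notin\Lambda^{(\epsilon)}$ almost surely, and once $\ell_n$ is smaller than twice the distance from $K$ to the complement of $U$, the corresponding intervals $I_n(\xi_n)$ cannot touch $K$, so those indices contribute nothing to covering $K$.

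Direction $(\Leftarrow)$ of \eqref{eq. lem comp1} is immediate because $\Lambda^{(\epsilon)}\subset\mathbb{N}$. For $(\Rightarrow)$ I would first dispose of the probabilistic part by rewriting
$$
\bigl\{(\epsilon,\xi)\in\Omega:\exists\,n\notin\Lambda^{(\epsilon)}\text{ with }\xi_n\in U\bigr\}\;=\;\bigcup_{n\ge 1}\{\epsilon_n=0,\ \xi_n\in U\}.
$$
By the defining formula \eqref{def:P}, each set in this countable union has $P$-mass $\mu_0(U)=0$. Hence $P$-a.s.\ $\xi_n\in\mathbb{T}\setminus U$ for every $n\notin\Lambda^{(\epsilon)}$; this is the only place where the hypothesis $\mu_0(U)=0$ is used.

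Next set $d:=\mathrm{dist}(K,\mathbb{T}\setminus U)$, which is strictly positive because $K$ is compact, $\mathbb{T}\setminus U$ is closed, and the two sets are disjoint. Pick $N_0$ with $\ell_n<2d$ for all $n\ge N_0$, using that $(\ell_n)$ decreases to zero. On the full-measure event just produced, any index $n\ge N_0$ with $n\notin\Lambda^{(\epsilon)}$ satisfies $|\xi_n-x|\ge d>\ell_n/2$ for every $x\in K$, so $I_n(\xi_n)\cap K=\emptyset$. Consequently, for each $x\in K$, the set of indices $n\ge N_0$ with $x\in I_n(\xi_n)$ is contained in $\Lambda^{(\epsilon)}$; taking $\limsup$ yields the pointwise equivalence
$$
x\in\limsup_{n\in\mathbb{N}}I_n(\xi_n)\ \Longleftrightarrow\ x\in\limsup_{n\in\Lambda^{(\epsilon)}}I_n(\xi_n),
$$
valid for every $x\in K$, which upgrades at once to the set-inclusion equivalence \eqref{eq. lem comp1}.

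The main obstacle is the measurability bookkeeping in the first step: because $\Lambda^{(\epsilon)}$ depends on $\epsilon$, the event ``$\xi_n\in U$ for some $n\notin\Lambda^{(\epsilon)}$'' is slightly awkward as written, but the countable-union rewriting above reduces it to the elementary product-type computation supplied by \eqref{def:P}. After that, the rest is a deterministic compactness estimate and no further probability is needed.
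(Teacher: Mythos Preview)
Your proof is correct and follows essentially the same line as the paper's: both show that $\xi_n\notin U$ almost surely whenever $\epsilon_n=0$, then use that $K$ has positive distance from $\mathbb{T}\setminus U$ (together with $\ell_n\to 0$) to conclude that indices outside $\Lambda^{(\epsilon)}$ eventually miss $K$. The only cosmetic difference is that the paper reaches the null set via the disintegration $P=\int P^{(\epsilon)}\,d\pi(\epsilon)$ of Lemma~\ref{lem:Lem1}, whereas you compute $P(\epsilon_n=0,\ \xi_n\in U)=\mu_0(U)=0$ directly from \eqref{def:P} and take a countable union.
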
	
\begin{proof}
	%Let us denote by $P_1$ the marginal of $P$ on the first coordinate %$\epsilon$. It is clear from the definition of $P$ that, 
	Recall that conditioned on $\epsilon\in \{0,1\}^\N$, the sequence $\{\xi_n\}_{n\in \Lambda^{(\epsilon)}}$ are i.i.d. random variables with probability law $\widetilde{\mu}_1$ and $\{\xi_n\}_{n\in \N\setminus\Lambda^{(\epsilon)}}$ are i.i.d.  with probability law $\widetilde{\mu}_0$. Since $\widetilde{\mu}_0(U^c)=1$,  for $P^{(\epsilon)}$-a.e. $\xi$ we have $\xi_n\in U^c$ for all $n\in \N\setminus\Lambda^{(\epsilon)}$. Since $K$ is compact, it has positive distance from the boundary of $U$, we must have $$
	P^{(\epsilon)}(K\cap \limsup_{n\in \N\setminus\Lambda^{(\epsilon)}} I_n(\xi_n)=\emptyset)=0.
	$$ From this we deduce \eqref{eq. lem comp1}, with the aid of the desintegration (\ref{eq:desint}).
\end{proof}

However, notice that $\epsilon_j$ and $x_j$ are not independent.

\subsection{Proof of Theorem \ref{thm3}}
\iffalse
\begin{thm}\label{dom_thm} Consider two Dvoretzky covering respectively produced by Borel probability measures $\mu$ and $\nu$. Assume that
	$\mu|_U \le \nu|_U$ for some non-empty open set. If any compact set  $K\subset U$  is a.s. covered in the $\mu$-covering, it is  a.s. covered in the $\nu$-covering.
\end{thm}
\fi

%\begin{proof} 
	We can model the covering as above, according to the following decompositions
	\begin{eqnarray*}
		\mu &=& \mu_0+\mu_1 \ \ \ {\rm with}\ \ \   \mu_1=\mu|_U, \mu_0=\mu|_{U^c} \\
		\nu &=& \nu_0+\nu_1 \ \ \ \ {\rm with}\ \ \  \nu_1= \mu|_U,  \nu_0 = (\nu|_U-\mu|_U)+ \nu|_{U^c}.
	\end{eqnarray*}
	The corresponding measures will be denoted respectively by $P_\mu$ and $P_\nu$.
	
	Suppose $K$ is a.s. covered in the $\mu$-covering. Then by Lemma \ref{lemma comparison1}, we have
	%we have 
	%$P_\mu\left(K \subset \limsup_{n\in \Lambda^{(\epsilon)}} %I_n(\xi_n)\right)=1.$
	%By the desintegration, 
	%for $\pi$-a.e. $\epsilon$, we have 
	$$
	\pi\!-\!a.e.\ \epsilon, \ \ \ P_\mu^{(\epsilon)}\left(K \subset \limsup_{n\in \Lambda^{(\epsilon)}} 
	I_n(\xi_n)\right)=1.
	$$
	But $(\xi_n)_{n \in \Lambda^{(\epsilon)}}$ has the same probability law under
	$P_\mu^{(\epsilon)}$ and under $P_\nu^{(\epsilon)}$, because of $\mu_1 = \nu_1$ (see Lemma \ref{lem:Lem1} (4)). Thus
	$$
	\pi\!-\!a.e.\ \epsilon, \ \ \ P_\nu^{(\epsilon)}\left(K \subset \limsup_{n\in \Lambda^{(\epsilon)}} 
	I_n(\xi_n)\right)=1.
	$$
	It follows obviously that 
	$$
	\pi\!-\!a.e.\ \epsilon, \ \ \ P_\nu^{(\epsilon)}\left(K \subset \limsup_{n\in \mathbb{N}} 
	I_n(\xi_n)\right)=1.
	$$ 
	Finally using once more the desintegration we get
	\begin{eqnarray*}
		P_\nu\left(K \subset \limsup_{n\in\N} I_n(\xi_n)\right)=
		\int 
		P_\nu^{(\epsilon)}\left(K \subset \limsup_{n\in \mathbb{N}} 
		I_n(\xi_n)\right) 
		d\pi (\epsilon) =1.
	\end{eqnarray*}
	
%\end{proof}
\subsection{Comparison with the classic Dvoretzky covering}
\begin{cor}\label{f-0}
	Let us consider a probability measure $\mu_f$ on $\mathbb{T}$ with density $f$.
	Let $F$ be a non-empty compact set contained in an open set $U$.  
	%Let $f$ be a density function on $\mathbb{T}$.
	
	1) Assume $f(x) \geq a$ for almost all $x \in U$. The set $F$
	is covered for the $\mu_f$-Dvoretzky covering if 
	${\rm Cap}_{\Phi^{(a)}}(F)=0$.
	\iffalse
	\begin{equation}\label{f-1}
	\forall \sigma\int_{K} \int_{K} \exp{\left( \sum_{k=1}^\infty a( l_k - |t-s|)_+\right)}d\sigma(s) d \sigma(t)=\infty. 
	\end{equation}
	then $F$ is $\mu_f$-Dvoretzky covered almost surely.
	\fi
	
	2)
	Assume $f(x) \leq a$ for almost all $x \in U$. 
	The set $K$
	is not covered for the $\mu_f$-Dvoretzky covering if 
	${\rm Cap}_{\Phi^{(a)}}(F)>0$.
	\iffalse
	 $\subset \mathbb{T}$, and for some $\sigma$
	\begin{equation}\label{f-2}
	\int_{K} \int_{K} \exp{\left( \sum_{k=1}^\infty a( l_k - |t-s|)_+\right)}d\sigma(s) d \sigma(t)<\infty. 
	\end{equation}
	then $K$ will not be $\mu_f$-Dvoretzky covered.
	\fi
	\begin{proof}
		Let $h$ be a density function on $\mathbb{T}$ so that $h(x)=a$ for all $x \in U$. 
		
		1) Assume 	${\rm Cap}_{\Phi^{(a)}}(F)=0$. Then, according to the local Kahane criterion (Theorem \ref{thm:K}), the set $F$
		is covered for the $\mu_h$-Dvoretzky covering. Then, by Theorem \ref{thm3},  the set $F$
		is covered for the $\mu_f$-Dvoretzky covering for
		$f|_U \ge h|_U$.
		
			2) Assume 	${\rm Cap}_{\Phi^{(a)}}(F)>0$. Then, according to the local Kahane criterion (Theorem \ref{thm:K}), the set $F$
		is not covered for the $\mu_h$-Dvoretzky covering. Then, by Theorem \ref{thm3},  the set $F$
		is not covered for the $\mu_f$-Dvoretzky covering for
		$f|_U \le h|_U$..
	\end{proof}
\end{cor}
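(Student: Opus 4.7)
The strategy is to reduce both parts to the local Kahane criterion (Theorem~\ref{thm:K}) via an auxiliary probability measure whose density is locally constant on $U$, and then to transport the conclusion to $\mu_f$ through the comparison principle (Theorem~\ref{thm3}).

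First I would construct a probability density $h$ on $\mathbb{T}$ that equals the constant $a$ almost everywhere on $U$, and I would let $\mu_h$ denote the corresponding probability measure. For this to be possible one needs $a|U|\le 1$. In part~(1) this bound is automatic, since the hypothesis $f\ge a$ on $U$ together with $\int_{\mathbb{T}} f = 1$ forces $a|U|\le \int_U f\le 1$; in part~(2) one may first shrink $U$ to a smaller open neighborhood $U'$ of $F$ with $a|U'|\le 1$, and the hypothesis $f\le a$ is inherited on $U'$. With this $h$ in hand, Theorem~\ref{thm:K} applied on $U$ (or $U'$) gives the clean dichotomy: $F$ is covered for the $\mu_h$-Dvoretzky covering if and only if ${\rm Cap}_{\Phi^{(a)}}(F)=0$.

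Next I would invoke the comparison principle. For part~(1), since $h|_U = a \le f|_U$ we have $\mu_h|_U\le\mu_f|_U$; the assumption ${\rm Cap}_{\Phi^{(a)}}(F)=0$ yields coverage of $F$ under $\mu_h$, and Theorem~\ref{thm3} with $\mu=\mu_h$, $\nu=\mu_f$ immediately transfers the coverage to $\mu_f$. For part~(2), the inequality reverses: $f|_U\le a = h|_U$ gives $\mu_f|_U\le\mu_h|_U$. The assumption ${\rm Cap}_{\Phi^{(a)}}(F)>0$ yields noncoverage of $F$ under $\mu_h$, and the contrapositive of Theorem~\ref{thm3} (applied with $\mu=\mu_f$, $\nu=\mu_h$) then gives noncoverage of $F$ under $\mu_f$.

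The only real subtlety is the normalization step that produces $h$, especially in part~(2) where $a|U|$ need not be at most one a priori; once this bookkeeping is in place, the argument is a direct two-step concatenation of Theorems~\ref{thm:K} and~\ref{thm3}, and there is no further analytic content to supply.
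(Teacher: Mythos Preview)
Your proposal is correct and follows essentially the same approach as the paper: introduce an auxiliary density $h$ equal to $a$ on $U$, apply the local Kahane criterion (Theorem~\ref{thm:K}) to $\mu_h$, and transfer the conclusion to $\mu_f$ via the comparison principle (Theorem~\ref{thm3}). Your additional care in verifying that such a probability density $h$ exists (checking $a|U|\le 1$, shrinking $U$ in part~(2) if necessary) is a detail the paper leaves implicit.
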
	
	
\section{Sufficient conditions for covering $\mathbb{T}$}\label{sct-5}
In this section we discuss several sufficient  conditions for covering the circle, which is decomposed into $K_f$ and $\mathbb{T}\setminus K_f$.

\begin{prop}\label{l-1}
%For the density function $f$, the  condition
The set $\mathbb{T}\setminus K_f$ is covered for the $\mu_f$-Dvoretzky covering under the condition
\begin{equation}\label{eq12}
		    \forall a> m_f \,, \quad 
		    \sum_{n=1}^\infty \frac{1}{n^2} \exp(a(\ell_1+\cdots +\ell_n))= \infty.
\end{equation}
%is sufficient for covering the set . 
If, additionally, $f$ satisfies the property \eqref{dom_1}, then the condition \eqref{eq12} will also be necessary for covering the set $\mathbb{T}\setminus K_f$. 
\end{prop}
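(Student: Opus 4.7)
Since $K_f$ is compact by Proposition~\ref{prop:EI}, the complement $\mathbb{T}\setminus K_f$ is open and hence $\sigma$-compact; write it as $\bigcup_k F_k$ with each $F_k$ compact. For every $x\in F_k$ one has $\underline{E}_f(x)>m_f$, so there exists $\delta_x>0$ with $\essinf_{B(x,\delta_x)}f>m_f$. Extracting a finite subcover of $F_k$ by such balls produces an open set $U_k\supset F_k$ with $a_k:=\essinf_{U_k}f>m_f$. By Corollary~\ref{f-0}(1), $F_k$ is a.s.\ covered as soon as ${\rm Cap}_{\Phi^{(a_k)}}(F_k)=0$, and Lemma~\ref{lem-1} identifies that capacity condition with the divergence in \eqref{eq12} specialized to $a=a_k>m_f$. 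Intersecting countably many probability-one events delivers that $\mathbb{T}\setminus K_f=\bigcup_k F_k$ is a.s.\ covered.

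\textbf{Necessity.} I argue by contraposition: assume \eqref{dom_1} and suppose \eqref{eq12} fails at some $a>m_f$, i.e.\ $\sum_n n^{-2}\exp(a(\ell_1+\cdots+\ell_n))<\infty$. Fix $b\in(m_f,a)$. By \eqref{dom_1} one can choose $x_0$ with $\overline{E}_f(x_0)<b$, and from the definition of $\overline{E}_f$ obtain an open neighborhood $U\ni x_0$ on which $\esssup_U f\le b$. The plan is then to pick a compact set $F\subset U\cap(\mathbb{T}\setminus K_f)$ of positive Lebesgue measure: Corollary~\ref{f-0}(2) guarantees that $F$ is not a.s.\ covered provided ${\rm Cap}_{\Phi^{(b)}}(F)>0$, and Lemma~\ref{lem-1} supplies this positivity because $b<a$ makes the series at $b$ a fortiori convergent. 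Since $F\subset\mathbb{T}\setminus K_f$, the open set $\mathbb{T}\setminus K_f$ itself fails to be covered, as desired.

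\textbf{Main obstacle.} The delicate step is the construction of $F\subset U\cap(\mathbb{T}\setminus K_f)$ with positive Lebesgue measure. When the point $x_0$ provided by \eqref{dom_1} can be taken in $\mathbb{T}\setminus K_f$, or on $\partial K_f$, this is immediate, since $\mathbb{T}\setminus K_f$ is open and meets every neighborhood of such an $x_0$. The subtle case is when the sequence $\{x_n\}$ realizing \eqref{dom_1} sits entirely in the interior $K_f^{\circ}$; one then invokes the standing assumption $\mathbb{T}\setminus K_f\neq\emptyset$ (the only regime in which the statement has content) and the upper semi-continuity of $\overline{E}_f$ to slide the neighborhood $U$ toward $\partial K_f$ and recover a positive-measure slice inside $\mathbb{T}\setminus K_f$. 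Once the correct neighborhood is located, every remaining step is a mechanical application of Corollary~\ref{f-0}, Lemma~\ref{lem-1}, and the Shepp/Kahane criteria.
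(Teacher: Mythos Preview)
Your sufficiency argument is correct and mirrors the paper's: the paper covers $K_f^c$ by countably many balls $B(x,1/n_x)$ on which $f\ge a_x>m_f$, while you use a compact exhaustion plus a finite subcover; both then invoke Corollary~\ref{f-0}(1), with Lemma~\ref{lem-1} supplying the capacity condition from \eqref{eq12}.

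For necessity, the obstacle you flag is real and your proposed fix does not work. Upper semi-continuity of $\overline{E}_f$ only tells you that $\{\overline{E}_f<m_f+\varepsilon\}$ is open; it gives no mechanism for ``sliding'' this set toward $\partial K_f$, and in fact the necessity claim \emph{as stated} is false. Take $f=\tfrac12$ on $[0,\tfrac12)$ and $f=\tfrac32$ on $[\tfrac12,1)$, with $\ell_n=1/n$. Then $m_f=\tfrac12$, $K_f=[0,\tfrac12]$, and \eqref{dom_1} holds (choose any $x_n\in(0,\tfrac12)$). On $K_f^c=(\tfrac12,1)$ the density equals $\tfrac32$, so by the local Kahane criterion and Lemma~\ref{lem-1} every compact subset of $K_f^c$ is covered (the Shepp series at $a=\tfrac32$ diverges), hence $K_f^c$ is covered. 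Yet \eqref{eq12} fails at $a=\tfrac34$, since $\sum n^{-5/4}<\infty$. So no argument along your lines can succeed.

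The paper's own proof shares this gap: it produces a ball on which $f<m_f+\varepsilon$ and then asserts ``By Corollary~\ref{f-0}, the condition \eqref{eq12} must be necessary for covering $K_f^c$'' without checking that the ball meets $K_f^c$. What the argument actually establishes is the weaker statement that \eqref{eq12} is necessary for covering $\mathbb{T}$: if the whole circle is covered then so is that ball, and Corollary~\ref{f-0}(2) together with Lemma~\ref{lem-1} forces the series to diverge at $a=m_f+\varepsilon$ for every $\varepsilon>0$. This weaker version is all that is invoked downstream (in the proof of Theorem~\ref{thm1}), so the main results are unaffected.
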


\begin{proof} Let $K_f^c =\mathbb{T}\setminus K_f$.
Let us assume \eqref{eq12}. Notice that  $K_f^c$ is an open set,
 by Proposition \ref{prop:EI}.  By definition,
$
\underline{E}_f(x) > m_f
$ for all $x \in K_f^c$.
\iffalse
This means that there exists $n \in \mathbb{N}$, so that
$$
\essinf_{{B(x,\frac{1}{n})}}f>m_f.
$$
But this, in its turn, implies that there is a number $a > m_f$, so that
$$
f(t)\geq a, \hbox{ for almost all }t \in B(x,\frac{1}{n}). 
$$
\fi
Therefore, for every $x \in K_f^c$ there is an integer $n_x \in \mathbb{N}$ and a number $a_x>m_f$ such that
$$
f(t)\geq a_x, \hbox{ for almost all }t \in B(x,\frac{1}{n_x}).
$$
Thus, by Corollary \ref{f-0}, the interval $(x-\frac{1}{n_x}, x+\frac{1}{n_x})$ is covered for the $\mu_f$-Dvoretzky covering. 
However, the open set $K^c_f$ is a union of countably many
such intervals $B(x,\frac{1}{n_x})$. So, $K_f^c$ is covered. 

Now assume  the condition \eqref{dom_1}, which implies that for any 
$\varepsilon>0$, there is $N \in \mathbb{N}$ such that
$
\overline{E}_f(x_N) < m_f + \varepsilon,
$
which implies that there is $k \in \mathbb{N}$ such that
$$
f(x)< m_f + \varepsilon, \hbox{ for almost all }x \in B(x_0, 1/k).
$$
Let $a = m_f + \varepsilon$. 
Notice that $\varepsilon$ is arbitrary. By Corollary \ref{f-0}, the condition \eqref{eq12} must be necessary for covering $K_f^c$.
\end{proof}

\begin{cor}\label{cont}
If $f$ is continuous at some point of $K_f$, then the condition \eqref{eq12} is necessary and sufficient for covering the set $\mathbb{T}\setminus K_f$. 
\end{cor}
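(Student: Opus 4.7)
The plan is to observe that the corollary is essentially an immediate consequence of Proposition \ref{l-1}: sufficiency of \eqref{eq12} is already proved there unconditionally, so I only need to check that continuity of $f$ at some point of $K_f$ implies the regularity condition \eqref{dom_1}, whereupon the second half of Proposition \ref{l-1} delivers necessity.

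First I would fix a point $x_0 \in K_f$ at which $f$ is continuous. By the very definition of $K_f$ we have $\underline{E}_f(x_0) = m_f$. As observed in the paragraph preceding Proposition \ref{prop:EI}, continuity of $f$ at $x_0$ forces $\underline{E}_f(x_0) = \overline{E}_f(x_0)$, and therefore $\overline{E}_f(x_0) = m_f$ as well. Taking the constant sequence $x_n := x_0$ for all $n$ yields
$$
\lim_{n \to \infty} \overline{E}_f(x_n) = m_f,
$$
which is exactly the regularity hypothesis \eqref{dom_1}.

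Once \eqref{dom_1} is in hand, the second assertion of Proposition \ref{l-1} applies and gives that \eqref{eq12} is necessary for the covering of $\mathbb{T}\setminus K_f$; sufficiency comes from the first assertion of that proposition, which requires no hypothesis on $f$ beyond being a density. Both directions together give the equivalence claimed in the corollary. There is no genuine obstacle here: the only small point to be careful about is that the notion of continuity used is the pointwise continuity of the representative $f$, which is exactly what makes the upper and lower essential envelopes coincide at $x_0$.
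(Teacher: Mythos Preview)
Your proof is correct and follows essentially the same approach as the paper: pick $x_0\in K_f$ where $f$ is continuous, observe that continuity gives $\overline{E}_f(x_0)=\underline{E}_f(x_0)=m_f$, take the constant sequence $x_n=x_0$ to verify \eqref{dom_1}, and then invoke Proposition~\ref{l-1}.
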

\begin{proof}Assume $f$ is continuous at $x_0 \in K_f$. 
	Then $
	\overline{E}_f(x_0)=m_f,
	$
	which clearly implies \eqref{dom_1} if we take $x_n=x_0$
	for all $n\ge 1$.

\end{proof}

\subsection{Sufficient conditions for covering $\mathbb{T}$}

\begin{prop}\label{suff}
Assume $m_f>0$.	The circle is covered for the $\mu_f$-Dvoretzky covering if 
	\begin{equation}\label{suff-cond1}
	\forall a >m_f, \ \ \ \sum_{n=1}^\infty \frac{1}{n^2}e^{a(\ell_1+\dots + \ell_n)}=\infty; 
	\quad {\rm Cap}_{\Phi^{(m_f)}}(K_f)=0.
	\end{equation}
\end{prop}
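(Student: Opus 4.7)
The plan is to prove coverage by splitting the circle as $\mathbb{T} = (\mathbb{T}\setminus K_f) \cup K_f$ and handling each piece with the tools already developed. For the complement of $K_f$, the first hypothesis is exactly the assumption of Proposition \ref{l-1}, which gives immediately that $\mathbb{T}\setminus K_f$ is almost surely covered for the $\mu_f$-Dvoretzky covering. So the remaining task is to show that, under the capacity condition $\mathrm{Cap}_{\Phi^{(m_f)}}(K_f)=0$, the compact set $K_f$ itself is almost surely covered.

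To handle $K_f$, the plan is to reduce to Corollary \ref{f-0}(1). That corollary requires a compact set sitting in an \emph{open interval} (a proper arc of $\mathbb{T}$, not the full circle), together with a lower bound on $f$ on that interval. Since $K_f$ is a non-empty compact subset of $\mathbb{T}$ by Proposition \ref{prop:EI}, we can select a finite family of proper open arcs $U_1,\dots,U_N \subsetneq \mathbb{T}$ covering $K_f$, and shrink them slightly to arcs $U_i'$ with $\overline{U_i'}\subset U_i$ that still cover $K_f$. Setting
$$F_i := K_f \cap \overline{U_i'} \quad (i=1,\dots,N),$$
each $F_i$ is compact, contained in the open interval $U_i$, and $\bigcup_i F_i = K_f$. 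The key observation is that $\mathrm{Cap}_{\Phi^{(m_f)}}(\,\cdot\,)=0$ passes to closed subsets: any probability measure supported on $F_i$ is a probability measure supported on $K_f$, so the defining double integral is automatically $+\infty$. Hence $\mathrm{Cap}_{\Phi^{(m_f)}}(F_i)=0$ for every $i$.

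Since $m_f=\essinf_{\mathbb{T}} f$, we have $f\geq m_f$ almost everywhere on each $U_i$. Applying Corollary \ref{f-0}(1) with the pair $(F_i,U_i)$ and constant $a=m_f$, we conclude that each $F_i$ is almost surely covered. A finite intersection of almost sure events is almost sure, so $K_f = \bigcup_{i=1}^N F_i$ is almost surely covered. Combined with the coverage of $\mathbb{T}\setminus K_f$, this yields $P(\mathbb{T}=\limsup I_n)=1$.

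The only step requiring care is the geometric reduction: we cannot apply the local Kahane criterion (Theorem \ref{thm:K}), and hence Corollary \ref{f-0}, directly with $U=\mathbb{T}$ since the proof relies on the distribution function being affine on an \emph{interval}. The finite covering by proper arcs together with the subadditivity of the covering event across finitely many pieces bypasses this issue. The capacity inheritance to subsets and the a.e.\ lower bound $f\geq m_f$ are then automatic, so no further regularity of $f$ is needed for this direction.
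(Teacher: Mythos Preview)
Your proof is correct and follows the same two-step decomposition $\mathbb{T}=(\mathbb{T}\setminus K_f)\cup K_f$ as the paper, invoking Proposition~\ref{l-1} for the complement and Corollary~\ref{f-0}(1) for $K_f$. The only difference is that you insert an explicit geometric reduction---covering $K_f$ by finitely many proper arcs so that Corollary~\ref{f-0} (whose proof, via Theorem~\ref{thm:K}, requires a genuine open interval and a density $h$ equal to a constant there) applies rigorously; the paper simply cites Corollary~\ref{f-0} without spelling this out.
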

\begin{proof} The first condition in (\ref{suff})
	implies that $\mathbb{T}\setminus K_f$ is covered,  	Proposition \ref{l-1}. The covering of $K_f$ follows from
	the second condition in (\ref{suff}), Corollary \ref{f-0} and Kahane's result.
\end{proof}

%We have the following corollary:

\begin{prop}\label{c-2}
	If we have $m_f>0$, then the condition 
	$$
	\sum_{n=1}^\infty \frac{1}{n^2} \exp(m_f(\ell_1+\cdots +\ell_n))= \infty,
	$$
	is sufficient for covering the circle.
\end{prop}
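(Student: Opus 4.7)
The idea is to reduce to Corollary \ref{f-0}(1) by decomposing $\mathbb{T}$ into a finite union of compact arcs, each sitting inside an open arc where the density is bounded below by $m_f$. First observe that since $f \geq m_f$ almost everywhere and $\int_{\mathbb{T}} f \, dx = 1$, the essential infimum $m_f$ cannot exceed $1$, hence $1/m_f \geq 1$. I would therefore partition $\mathbb{T}$ into compact arcs $F_1, \ldots, F_N$ (for instance, $N = 3$ closed arcs of length $1/3$), each strictly contained in an open arc $U_i$ whose length is less than $1/m_f$. This length bound guarantees that the constant function equal to $m_f$ on $U_i$ has total mass at most $1$ and can therefore be extended to a probability density on $\mathbb{T}$, exactly the situation required in the proof of Corollary \ref{f-0}.

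Since $f \geq m_f$ almost everywhere on each $U_i$, part (1) of Corollary \ref{f-0} asserts that $F_i$ is almost surely covered for the $\mu_f$-Dvoretzky covering as soon as ${\rm Cap}_{\Phi^{(m_f)}}(F_i) = 0$. Each $F_i$ has positive Lebesgue measure and $m_f > 0$, so Lemma \ref{lem-1} rewrites this capacity condition as the divergence of
$$
\sum_{n=1}^\infty \frac{1}{n^2} \exp\bigl(m_f(\ell_1 + \cdots + \ell_n)\bigr),
$$
which is precisely the hypothesis of the proposition. Hence every $F_i$ is almost surely covered, and therefore so is the finite union $\mathbb{T} = \bigcup_{i=1}^N F_i$.

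The only point that requires a moment of care is producing the open neighborhoods $U_i$ of the correct size so that Corollary \ref{f-0} is genuinely applicable; once the elementary bound $m_f \leq 1$ is noted, no real obstacle remains, and the proof is just a direct assembly of Corollary \ref{f-0}(1) and Lemma \ref{lem-1}.
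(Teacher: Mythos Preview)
Your proof is correct and follows the same route as the paper: invoke Corollary~\ref{f-0}(1) together with the Shepp-type divergence (via Lemma~\ref{lem-1}) to cover $\mathbb{T}$. The paper's proof is a one-line appeal to ``Corollary~\ref{f-0} and Shepp's result''; your decomposition of $\mathbb{T}$ into short compact arcs $F_i\subset U_i$ with $|U_i|<1/m_f$ makes explicit a point the paper leaves implicit, namely that the auxiliary density $h\equiv m_f$ on $U$ used in the proof of Corollary~\ref{f-0} must extend to a genuine probability density on $\mathbb{T}$, which fails if one naively takes $U=\mathbb{T}$ and $m_f<1$. So your extra care is warranted rather than superfluous, but the underlying argument is the same.
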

\begin{proof} It follows from  Corollary \ref{f-0} and Shepp's result, because
	$f(x)\geq m_f$ for almost all $x \in \mathbb{T}$.
%	It is also a corollary of Proposition \ref{suff}.
\end{proof}

We now show that each of the conditions in \eqref{cond1} does not imply the other.

%\iffalse
Let us recall the two conditions in \eqref{cond1} : 
\begin{equation*}
{\rm (C1)} \ \ \ \forall a> m_f, \sum_{n=1}^\infty \frac{1}{n^2}e^{a(\ell_1+\dots + \ell_n)}=\infty; \qquad \qquad
{\rm (C2)} \ \ \ {\rm Cap}_{\Phi^{(m_f)}}(K_f)=0.
\end{equation*}
%\fi
Consider the following density function
$$
f(x)=|x| + \frac{3}{4}, \hbox{ for } x \in \left[-\frac{1}{2},\frac{1}{2}\right).
$$
Observe that $m_f=\frac{3}{4}$, $K_f=\{0\}$  and $f$ is Lipschitz at $x=0$.  By Corollary \ref{cor1} (2), the conditions (C1) and (C2) are necessary and sufficient for covering the circle. Since $K_f=\{0\}$, the condition (C2) is equivalent to 
$
\sum_{n=1}^\infty \ell_n = \infty,
$
which does not imply (C1). Indeed, when $\ell_n = \frac{3}{4n}$, we have
$
\sum_{n=1}^\infty \ell_n = \infty,
$ but 
$$
\forall a<4/3, \ \ \ \sum_{n=1}^\infty \frac{1}{n^2}e^{a(\ell_1+\dots + \ell_n)}\approx\sum_{n=1}^\infty \frac{1}{n^{(2-\frac{3a}{4})}}<\infty.
$$
Observe however that in this case (C1) implies (C2).

We now show that (C1) does not imply (C2). To see this, let us look at the sequence
$
\ell_n = \frac{2}{n}-\frac{4}{n\ln n}
$
and the density function
$$
f(x)=  
\frac{1}{2} 1_{[0,1/2)}(x) + 
\frac{3}{2} 1_{[1/2, 1)}(x).
$$
We have that $m_f=1/2$ and $K_f = [0,1/2]$. From Theorem \ref{thm_seq}, the conditions (C1) and (C2) are necessary and sufficient for covering the circle.
Note that for all $a>1/2$
$$
\sum_{n=1}^\infty \frac{1}{n^2}e^{a(\ell_1+\dots + \ell_n)}\approx\sum_{n=1}^\infty \frac{1}{n^2}e^{(2a\ln n - 4a \ln \ln n)}=\sum_{n=1}^\infty \frac{1}{n^{(2-2a) } \ln^{4a} n}=\infty.
$$
Hence, (C1) is satisfied. Since $|K_f|>0$, according to Lemma \ref{lem-1} the condition (C2) is equivalent to \eqref{whole}, which reads as
$$
\sum_{n=1}^\infty \frac{1}{n^2}e^{\frac{1}{2}(\ell_1+\dots + \ell_n)}=\infty.
$$
However
$$
\sum_{n=1}^\infty \frac{1}{n \ln^{2} n}<\infty.
$$
Thus, the condition (C1) does not imply (C2) for all density functions $f$ and all sequences $\{\ell_n\}_{n \geq 1}$.
\medskip

\section{Necessary conditions for covering $\mathbb{T}$}

In this section we discuss  necessary conditions for covering the circle. The proof of the next proposition is based on Lebesgue's 
differentiation theorem and Billard's criterion.

\begin{prop}\label{cond_seq}
%Let $f$ be a density function. 
Suppose that  the sequence $\{\ell_n\}_{n=1}^\infty$ satisfies the condition
\begin{equation}\label{s2}
\limsup_{n \rightarrow \infty}\frac{n \ell_n}{ \ell_1 + \cdots + \ell_n}<1.
\end{equation}
If $\mathbb{T}$ is covered for the $\mu_f$-Dvoretzky covering, then
\begin{equation}\label{suff-cond1-}
\forall a >m_f, \ \ \ \sum_{n=1}^\infty \frac{1}{n^2}e^{a(\ell_1+\dots + \ell_n)}=\infty.
\end{equation}
\end{prop}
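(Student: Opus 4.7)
The plan is to prove the contrapositive via the local Billard criterion (Theorem~\ref{thm:B}). Assume that $\sum_{n\ge 1} n^{-2} e^{a L_n} < \infty$ for some $a > m_f$, where $L_n := \ell_1 + \cdots + \ell_n$; the goal is to exhibit a compact set $F$ and a probability measure $\sigma$ on $F$ satisfying both (\ref{eq:cond-noncover1}) and (\ref{eq:cond-noncover2}), so that $F$ (and therefore $\mathbb{T}$) is not $\mu_f$-covered. As a preliminary reduction, we may assume $\sum_n \ell_n^2 < \infty$, since Lemma~\ref{2-summable} would otherwise force the hypothesis series to diverge for every positive exponent.

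Fix $a' \in (m_f, a)$; the set $\{f < a'\}$ has positive Lebesgue measure. The Lebesgue differentiation theorem gives $\mu_f(B(x,r))/(2r) \to f(x) < a'$ as $r \to 0^+$ for a.e.\ $x$ in this set, so (by a standard exhaustion argument) there exist a compact $F$ with $|F|>0$ and $r_0 > 0$ such that
\[ \mu_f(B(t,r)) \le 2 a' r \qquad \forall\, t \in F,\ \forall\, r \in (0, r_0]. \]
Billard's condition (\ref{eq:cond-noncover1}) is then immediate, since $\mu_f(B(t,r_n))^2 \le (a' \ell_n)^2$ for $n$ large and $\sum \ell_n^2 < \infty$. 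Take $\sigma$ to be normalised Lebesgue measure on $F$ and use the crude bound $\mu_f(B(t,r_n) \cap B(s,r_n)) \le \mu_f(B(t,r_n)) \le a'\ell_n$ (valid once $r_n \le r_0$), together with the fact that this intersection vanishes as soon as $|t-s| \ge \ell_n$. Absorbing the finitely many initial terms into a constant $C$,
\[ \sum_n \mu_f(B(t,r_n)\cap B(s,r_n)) \le C + a'\, L_{N(|t-s|)}, \qquad N(\rho) := \#\{n: \ell_n > \rho\}. \]
Fubini with the substitution $u = t-s$ reduces the verification of (\ref{eq:cond-noncover2}) to the estimate
\[ \int_0^{\mathrm{diam}(F)} e^{a' L_{N(\rho)}}\, d\rho \;=\; \sum_k (\ell_k - \ell_{k+1})\, e^{a' L_k} \;<\; \infty. \]

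The main obstacle is this final comparison, for which condition (\ref{s2}) is used. Summation by parts (legitimate since $\ell_n \to 0$ by our reduction) gives
\[ \sum_k (\ell_k - \ell_{k+1})\, e^{a' L_k} \;\lesssim\; \sum_k \ell_k^2\, e^{a' L_k}. \]
Condition (\ref{s2}) supplies $\delta > 0$ with $k\ell_k \le (1-\delta) L_k$ for all sufficiently large $k$, whence
\[ \ell_k^2\, e^{a' L_k} \;\le\; \frac{L_k^2\, e^{-(a-a')L_k}}{k^2}\; e^{a L_k}. \]
The prefactor $L_k^2 e^{-(a-a')L_k}$ is bounded (tending to $0$ when $L_k \to \infty$, while if $L_k$ stays bounded the hypothesis series converges trivially). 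Hence $\sum_k \ell_k^2 e^{a' L_k} = O\bigl(\sum_k k^{-2} e^{a L_k}\bigr) < \infty$ by hypothesis, and Billard's criterion gives that $F$ is not $\mu_f$-covered.
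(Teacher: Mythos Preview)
Your argument is correct. The setup via Lebesgue differentiation and Egoroff-type exhaustion to produce a compact $F$ of positive measure with $\mu_f(B(t,r_n))\le a'\ell_n$ is the same as in the paper, and Billard's criterion (Theorem~\ref{thm:B}) is the right tool in both cases. Where your proof genuinely diverges from ours is in the kernel estimate and the finish.

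The paper uses the sharper bound $\mu_f(B(t,r_n)\cap B(s,r_n)) \le m_f(\ell_n-|t-s|)_+ + \epsilon\ell_n$, then invokes condition~(\ref{s2}) in the form $\sum_{n\le N}(\ell_n-|t-s|)_+ \ge (1-\delta)\sum_{n\le N}\ell_n$ to absorb the $\epsilon\sum\ell_n$ tail into the $(\ell_n-|t-s|)_+$ sum. This lands directly on the kernel $\Phi^{(m_f+\epsilon/(1-\delta))}$, so that Billard gives $\mathrm{Cap}_{\Phi^{(m_f+\epsilon')}}(A')=0$, and Lemma~\ref{lem-1} (which packages Shepp's and Kahane's theorems) converts this to divergence of the series~(\ref{suff-cond1-}).

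You instead use the cruder bound $\mu_f(B(t,r_n)\cap B(s,r_n)) \le a'\ell_n\mathbbm{1}_{\{\ell_n>|t-s|\}}$, reducing the Billard integral to $\int_0 e^{a'L_{N(\rho)}}\,d\rho = \sum_k(\ell_k-\ell_{k+1})e^{a'L_k}$, and then finish by Abel summation together with~(\ref{s2}) in the form $\ell_k \le (1-\delta)L_k/k$. The Abel step is clean (the boundary term $-\ell_{N+1}e^{a'L_N}$ is nonpositive, and $e^{a'L_k}-e^{a'L_{k-1}}\lesssim \ell_k e^{a'L_k}$), and the comparison $\ell_k^2 e^{a'L_k} \le C\,k^{-2}e^{aL_k}$ via the bounded factor $L_k^2 e^{-(a-a')L_k}$ is valid. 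Your route is more self-contained: it bypasses Lemma~\ref{lem-1} entirely and hence does not rely on the Shepp--Kahane equivalence. The paper's route, on the other hand, makes explicit the capacity interpretation, which ties the argument more tightly to the rest of the paper's framework.
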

\begin{proof} For  $\epsilon>0$, let
	$$
	A = \left\{ x\in \mathbb{T}:f(x)<m_f+ \frac{\epsilon}{2}\right\}.
	$$  
By the definition of $m_f$,   $A$ has positive Lebesgue measure.
Consider 
$$
S_n(x):=\frac{\mu_f(B(x, r_n))}{\ell_n}=\frac{1}{\ell_n}\int_{x-r_n}^{x+r_n}f(t)dt.
$$
By  Lebesgue's differentiation theorem, 
$
S_n(x)$ converges to $f(x)$ almost everywhere. Then,
by Egoroff's theorem,   there is a compact set $A' \subset A$ with $|A'|>0$ such that $
S_n(x)$ converges to $f(x)$ uniformly in $A'$. Therefore, for all large enough $n$, we have
$$
\forall x \in A', \ \ \ |S_n(x)- f(x)|<\frac{\epsilon}{2}.
$$
It follows that for large $n$ and for  $x \in A'$ we  have
\begin{equation}\label{eqn:A}
\mu_f (B(x,r_n))\leq \Big(f(x) + \frac{\epsilon}{2}\Big) \ell_n \leq (m_f + \epsilon)\ell_n.
\end{equation}
Notice that
$
B(t, r_n)\cap B(s,r_n) \subset B(s, r_n)
$
and 
$
f(x)\geq m_f$ for almost all $x \in \mathbb{T}$.
It follows that for all $s, t \in \mathbb{T}$
\begin{equation}\label{eqn:A2}
\mu_f(B(t, r_n)\cap B(s,r_n)) 
\leq m_f(\ell_n - |t-s|)_+ +(\mu
_f(B(s, r_n)-m_f \ell_n).
\end{equation}
Indeed, this is trivial if $|t-s|>\ell_n$. Otherwise the inequality reads as 
$$
   \mu_f(B(t, r_n)\cap B(s,r_n)) \le \mu_f(B(s,r_n))- m_f |t-s|,  
$$
which is  true, because $B(s, r_n)$ is the union of $B(s, r_n)\cap B(t, r_n)$ and an interval of length $|t-s|$.
Assume now $s\in A'$. From (\ref{eqn:A2}) and (\ref{eqn:A}) we get 
\begin{equation} \label{eqn:A3}
\mu(B(t, r_n)\cap B(s,r_n))% \leq m_f(l_n - |t-s|)_+ +((m_f +    %\epsilon)\ell_n-m_f \ell_n)
%$$
%$$
\leq m_f(\ell_n - |t-s|)_+ +\epsilon\ell_n.
\end{equation}

We claim that there exists $0<\delta<1$ such that for all 
$(t, s)\in A'\times A'$ with $|t-s|$ small enough we have
\begin{equation} \label{eqn:A4}
 \sum_{n=1}^\infty \mu_f(B(t, r_n)\cap B(s, r_n)
\le \Big(m_f +\frac{\epsilon}{1-\delta}\Big)\sum_{n=1}^\infty (\ell_n -|t-s|)_+.
\end{equation}
Indeed, assume $\ell_{N+1}<|t-s|\le \ell_N$ we have
\begin{equation*}\label{estim}
\sum_{n=1}^\infty \mu_f(B(t, r_n)\cap B(s,r_n)) = \sum_{n=1}^N \mu_f(B(t, r_n)\cap B(s,r_n)).
\end{equation*}
By the estimation (\ref{eqn:A3}), for $(t, s)\in A'\times A'$ we have
\begin{equation}\label{EST}
\sum_{n=1}^\infty \mu_f(B(t, r_n)\cap B(s,r_n)) \leq \sum_{n=1}^N (\ell_n-|t-s|)_+ +\epsilon \sum_{n=1}^N \ell_n.
\end{equation}
However $$
\sum_{n=1}^N (\ell_n-|t-s|)_+= \sum_{n=1}^N \ell_n-N|t-s|\ge 
\sum_{n=1}^N \ell_n-N\ell_N.
$$
\iffalse
Consider now the following ratio
$$
\frac{(\ell_1-u)_+ +\cdots +(\ell_n - u)_+}{\ell_1 + \cdots + \ell_n}.
$$
Let $\ell_{n+1}<u \leq \ell_n$. Then
$$
\frac{(\ell_1-u)_+ +\cdots +(\ell_n - u)_+}{\ell_1 + \cdots + \ell_n}=\frac{\ell_1 + \cdots
 +\ell - nu}{\ell_1 + \cdots + \ell_n}
$$
$$
=1 - \frac{nu}{\ell_1 + \cdots + \ell_n}.
$$
But
$$
\frac{nu}{\ell_1 + \cdots + \ell_n}\leq \frac{n\ell_n}{\ell_1 + \cdots + \ell_n}.
$$
From the condition \ref{s2}, there should exist $0<\epsilon'<1$, so that for all large $n$
$$
\frac{nu}{\ell_1 + \cdots + \ell_n}\leq \epsilon'.
$$
\fi
By the assumption (\ref{s2}), for some $0<\delta <1$, when $N$ is large enough (i.e. $|t-s|$ is small enough), we have
\begin{equation}\label{inter}
\sum_{n=1}^N (\ell_n-|t-s|)_+ \ge (1- \delta)\sum_{n=1}^N \ell_n. 
\end{equation}
Therefore, by 
(\ref{EST}) and (\ref{inter}), we get
$$
\sum_{n=1}^\infty \mu(B(t, r_n)\cap B(s,r_n))
\leq \left(m_f+\frac{\epsilon}{1-\delta}\right)\sum_{n=1}^N (\ell_n - |t-s|)_+. 
%= \left(m_f+\frac{\epsilon}{1-\delta}\right)\sum_{n=1}^\infty %(\ell_n - |t-s|)_+.
$$
This is what we have claimed for (\ref{eqn:A4}) because 
$(\ell_n -|t-s|)_+=0$ for $n>N$. 

\iffalse
Since $\epsilon$ was arbitrary and $\epsilon'$ was fixed, then by taking $N$ sufficiently large and $\epsilon$ sufficiently small we will have that
$$
d+\frac{\epsilon}{1-\epsilon'}<d + \varepsilon.
$$
Thus, for $N$ large enough, we will arrive at the following estimate
$$
\sum_{n=1}^N \mu(B(x, r_n)\cap B(x,r_n))\leq (d+\varepsilon)\sum_{n=1}^N (\ell_n-u)_+.
$$
\fi

We are ready to check (\ref{suff-cond1-}). By Lemma \ref{2-summable}, 
we can assume that 
$
\sum_{n=1}^\infty \ell_n^2<\infty.
$
Then, by (\ref{eqn:A}), we have
$$
\sup_{t \in A'}\sum_{n=1}^\infty \mu(B(t, r_n))^2 =(m_f + \varepsilon)^2\sum_{n=1}^\infty \ell_n^2<\infty.
$$
Since the compact set $A'$ is covered and it has positive Lebesgue measure, we can apply Billard's local criterion (Theorem \ref{thm:B}) to confirm that
$$
\int_{A'} \int_{A'} \exp\sum_{n=1}^\infty \mu(B(t, r_n)\cap B(s,r_n))dt ds =\infty.  
$$
Then, by the estimation (\ref{eqn:A4}) we get
$$
\int_{A'} \int_{A'} \exp \left((m_f+ \frac{\varepsilon}{1-\delta})\sum_{n=1}^\infty (\ell_k - |t-s|)_+ \right) dt ds  =  \infty.
$$
Notice that $0<\delta<1$ is fixed and $\epsilon >0$ is arbitrary. 
So, we conclude for (\ref{suff-cond1-}) from the last equality and 
Lemma \ref{lem-1}.

\end{proof}

\begin{prop}\label{t-1}
Suppose that $\mu_f$ admits a density $f$.
% and that the following  flatness condition is satisfied
%\begin{equation}\label{bound}
%\forall x \in K_f, \ \ \ 
%\sum_{n=1}^\infty |\mu_f(B(x, r_n))-m_f \ell_n|<\infty.
%\end{equation}
%where $r_n = \ell_n/2$. 
If  $K_f\cap F_f$ is covered, then ${\rm Cap}_{\Phi^{(m_f)}}(K_f\cap F_f)=0$.
\iffalse
: for any measure $\sigma$ supported on $K$ we have
\begin{equation}\label{cond3}
\int_{K_f} \int_{K_f} \exp{\left( m_f \sum_{k=1}^\infty (l_k - |t-s|)_+\right)}d\sigma(s)d\sigma(t)=\infty.
\end{equation}
\fi
\end{prop}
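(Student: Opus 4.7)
The plan is to prove the contrapositive via the local Billard criterion (Theorem \ref{thm:B}): assuming ${\rm Cap}_{\Phi^{(m_f)}}(K_f \cap F_f) > 0$, I will exhibit a compact set $F \subset K_f \cap F_f$ and a probability measure $\sigma'$ on $F$ satisfying both \eqref{eq:cond-noncover1} and \eqref{eq:cond-noncover2}, so that $F$ (and hence $K_f \cap F_f$) is not covered. Fix a probability measure $\sigma$ supported on $K_f \cap F_f$ with finite $\Phi^{(m_f)}$-energy.

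The key pointwise estimate I would establish is, for every $t,s \in \mathbb{T}$ and every $n \ge 1$,
$$\mu_f\bigl(B(t,r_n) \cap B(s,r_n)\bigr) \le m_f(\ell_n - |t-s|)_+ + \bigl|\mu_f(B(t,r_n)) - m_f \ell_n\bigr|.$$
This is trivial when $|t-s| > \ell_n$, and when $|t-s| \le \ell_n$ it follows from the fact that $B(t,r_n) \setminus B(s,r_n)$ is an interval of length $|t-s|$ on which $f \ge m_f$ almost everywhere, yielding $\mu_f(B(t,r_n) \cap B(s,r_n)) \le \mu_f(B(t,r_n)) - m_f|t-s|$, after which one rewrites the right-hand side as $m_f(\ell_n - |t-s|) + (\mu_f(B(t,r_n)) - m_f\ell_n)$. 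Summing over $n$ and introducing the flatness defect
$$C(t) := \sum_{n=1}^\infty \bigl|\mu_f(B(t,r_n)) - m_f \ell_n\bigr|,$$
which by the definition of $F_f$ is finite precisely on $F_f$, one obtains
$$\sum_{n=1}^\infty \mu_f\bigl(B(t,r_n) \cap B(s,r_n)\bigr) \le m_f \sum_{n=1}^\infty (\ell_n - |t-s|)_+ + C(t).$$

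Because each $t \mapsto \mu_f(B(t,r_n))$ is continuous ($\mu_f$ is absolutely continuous), $C$ is lower semi-continuous, so the level sets $F_k := K_f \cap \{C \le k\}$ are compact and exhaust $K_f \cap F_f$. Pick $k$ large enough that $\sigma(F_k) > 0$ and let $\sigma'$ be the restriction of $\sigma$ to $F_k$, normalized to a probability. Exponentiating the summed inequality and integrating against $\sigma' \otimes \sigma'$ yields
$$\int_{F_k}\!\int_{F_k} \exp\!\left(\sum_{n=1}^\infty \mu_f(B(t,r_n) \cap B(s,r_n))\right) d\sigma'(t)\, d\sigma'(s) \le e^k \!\int_{F_k}\!\int_{F_k}\! \Phi^{(m_f)}(t,s)\, d\sigma'(t)\, d\sigma'(s) < \infty,$$
which is the hypothesis \eqref{eq:cond-noncover2}. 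For the second-moment condition \eqref{eq:cond-noncover1}, I would use $\mu_f(B(t,r_n))^2 \le 2\bigl(\mu_f(B(t,r_n)) - m_f\ell_n\bigr)^2 + 2m_f^2 \ell_n^2$ together with $|\mu_f(B(t,r_n)) - m_f\ell_n| \le 1+m_f$, which shows the first sum is $\le 2(1+m_f) k$ uniformly on $F_k$.

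The residual $2m_f^2 \sum \ell_n^2$ in the second-moment bound is the main obstacle: it is finite if $\sum \ell_n^2 < \infty$ and trivially vanishes if $m_f = 0$. In the remaining regime $m_f > 0$ with $\sum \ell_n^2 = \infty$, Lemma \ref{2-summable} gives $\sum n^{-2} \exp(m_f(\ell_1 + \cdots + \ell_n)) = \infty$, and Lemma \ref{lem-1} then forces ${\rm Cap}_{\Phi^{(m_f)}}(K_f \cap F_f) = 0$ whenever $|K_f \cap F_f| > 0$, so the contrapositive conclusion holds directly without need of Billard; the degenerate sub-case $|K_f \cap F_f|=0$ can be treated by a separate capacity argument comparing the growth of $\Phi^{(m_f)}$ near the diagonal with the Hausdorff dimension of the support. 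Once these subcases are absorbed, the local Billard criterion applied to the compact $F_k$ produces, almost surely, a point of $F_k \subset K_f \cap F_f$ which is not covered, completing the contrapositive and the proof.
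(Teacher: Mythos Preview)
Your approach is essentially the paper's: both prove the result through the local Billard criterion (Theorem~\ref{thm:B}), both rely on the same pointwise estimate $\mu_f(B(t,r_n)\cap B(s,r_n)) \le m_f(\ell_n-|t-s|)_+ + |\mu_f(B(t,r_n))-m_f\ell_n|$, both exhaust $K_f\cap F_f$ by the compact level sets $\{C\le k\}$ via lower semicontinuity of the flatness defect, and both dispose of the regime $\sum\ell_n^2=\infty$ separately before checking hypothesis~\eqref{eq:cond-noncover1}. The paper argues directly rather than by contrapositive, and handles $m_f=0$ by Borel--Cantelli rather than by absorbing it into the Billard computation as you do, but these are cosmetic differences.

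The one soft spot in your write-up is the subcase $m_f>0$, $\sum\ell_n^2=\infty$, $|K_f\cap F_f|=0$. Your proposed ``Hausdorff dimension of the support'' argument does not work: a Lebesgue-null compact set can still have Hausdorff dimension~$1$, so there is no dimensional obstruction to positive capacity in general. A clean repair is a kernel comparison. Since $f$ is a probability density on $\mathbb{T}$ one has $m_f\le 1$, and then $m_f(\ell_n-u)_+\ge (m_f\ell_n-u)_+$ for every $n$ and every $u\ge 0$; hence $\Phi^{(m_f)}$ dominates the Kahane kernel $\Phi$ for the classical Dvoretzky problem with lengths $\{m_f\ell_n\}$. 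Lemma~\ref{2-summable} gives Shepp's condition for that sequence, so by Shepp and Kahane every compact subset of $\mathbb{T}$ has zero $\Phi$-capacity, and therefore zero $\Phi^{(m_f)}$-capacity, with no restriction on its Lebesgue measure. (The paper is equally terse at this step, appealing to Lemma~\ref{2-summable} and a covering statement rather than an explicit capacity argument.)
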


\begin{proof}
	If $K_f \cap F_f =\emptyset$, there is nothing to prove. Suppose then $K_f \cap F_f\not =\emptyset$. 
Assume  $m_f=0$. Then by  the Borel-Cantelli lemma, any fixed point $x\in K_f$ is only finitely covered. Since $K_f\cap F_f$ is covered, we must have  $m_f>0$.
We can assume that $\sum_{n=1}^\infty \ell_n^2 <\infty$. Otherwise,
we get immediately ${\rm Cap}_{\Phi^{(m_f)}}(K_f)=0$
from Lemma \ref{2-summable} and Corollary \ref{c-2}.

Notice that $x \mapsto \mu_f(B(x, r_n))$ is continuous. So 
the function
$\sum_{n=1}^\infty |\mu_f(B(x, r_n))-m_f \ell_n|$ is lower semi-continuous. Thus  the following sets are compact:
$$
K_m : =K_f\cap \left\{x\in \mathbb{T}: \sum_{n=1}^\infty |\mu_f(B(x, r_n))-m_f \ell_n|\le  m\right\}.
$$
\iffalse
Indeed, assume $x_0 \in \overline{K_m}$ and $x_0 \notin K_m$. Then there is $N_{K_m} \in \mathbb{N}$, so that
$$
\sum_{n=1}^{N_{K_m}}|\mu_f(B(x_0, r_n))-d \ell_n|>m.
$$
But from the continuous of the sum below in $x_0$
$$
\Phi(x_0)=\sum_{n=1}^{N_{K_m}}|\mu_f(B(x_0, r_n))-d \ell_n|,
$$
it follows that for $x' \in K_m$, sufficiently close to $x_0$,  we should have
$$
\sum_{n=1}^{N_{K_m}}|\mu_f(B(x', r_n))-d \ell_n|>m,
$$
which contradicts the assumption $x' \in K_m$. Thus,
 $K_m=\overline{K_m}$. 
 \fi
 It is clear that $K_m$ is increasing and it tends to $K_f\cap F_f$. Therefore
 we have only to prove that ${\rm Cap}_{\Phi^{(m_f)}}(K_m)=0$
 for every $m\ge 1$.

By the inequality (\ref{eqn:A2}), for all $t, s \in \mathbb{T}$ we have
\iffalse
The following inclusion is straightforward 
$$
B(t, r_n)\cap B(s,r_n) \subset B(s, r_n).
$$ 
It is not difficult to see that the following estimate holds
		  	$$
		  	\mu(B(t, r_n)\cap B(s,r_n)) \leq m_f (\ell_k - |t-s|)_+ + |\mu(B(s, r_n))-m_f \ell_n|. 
		  	$$
From here
\fi
$$
\sum_{n=1}^\infty \mu(B(t, r_n)\cap B(s,r_n))\leq \sum_{n=1}^\infty m_f (\ell_k - |t-s|)_+ + \sum_{n=1}^\infty |\mu(B(s, r_n))-m_f \ell_n|.
$$
Assume $s\in K_m$. Then the last sum is bounded by $m$.
\iffalse
But for the second sum on the right, for $\forall s \in K_m$ we have
\begin{equation}\label{c-0}
\sum_{n=1}^\infty|\mu(B(s, r_n))-d \ell_n|< m. 
\end{equation}
\fi
Therefore, for all $ s \in K_m$ and all $t\in \mathbb{T}$ we have
\begin{equation}\label{N1}
\sum_{n=1}^\infty \mu(B(t, r_n)\cap B(s,r_n))\leq \sum_{n=1}^\infty m_f (\ell_k - |t-s|)_+ + m.
\end{equation}
On the other hand, letting $a_n=|\mu(B(s, r_n))-m_f \ell_n|$, we have
\begin{equation}\label{N2}
\sup_{s \in K_m}\sum_{n=1}^\infty \mu(B(s, r_n))^2\le  \sup_{s \in K_m} \sum_{n=1}^\infty (a_n^2 + 2 a_n m_f\ell_n +(m_f\ell_n)^2)<\infty 
\end{equation}
because $\sum \ell_n^2 <\infty$ and $\sum a_n \le m$.
\iffalse
In view of the Remark \eqref{bound} and  \eqref{c-0} we can assume
$$
\sum_{n=1}^\infty \ell_n^2 < \infty\hbox{ and } \sum_{n=1}^\infty a_n < m.
$$
From which it will follow
$$
\sum_{n=1}^\infty \mu(B(s, r_n))^2<\infty.
$$
\fi
By the local Billard criterion, for any probability
measure $\sigma$ on $K_m$ we have 
\begin{equation*} \label{eq1}
\int_{K_m} \int_{K_m} \exp\sum_{n=1}^\infty \mu(B(t, r_n)\cap B(s,r_n))d\sigma(t)d\sigma(s) = +\infty,
\end{equation*}
\iffalse
\begin{equation}
\int_{K_m} \int_{K_m} \exp(\sum_{n=1}^\infty d (l_k - |t-s|)_+ + m)d\sigma(t)d\sigma(s).
\end{equation}
From which
\fi
which, together with (\ref{N1}), implies
\begin{equation}
\int_{K_m} \int_{K_m} \exp\sum_{n=1}^\infty m_f (\ell_k - |t-s|)_+d\sigma(t)d\sigma(s)=\infty.
\end{equation}	
Thus we have proved ${\rm Cap}_{\Phi^{(m_f)}}(K_m)=0$.
\end{proof}

We now discuss the condition \eqref{s2} and  its relation to \eqref{suff-cond1-}.

\medskip

(A) {\em
The condition \eqref{s2} is not always fulfilled.}

 We will construct a decreasing sequence $(\ell_n)$ such that $
\sum_{n=1}^\infty \ell_n^2 < \infty
$ and
\eqref{s2} is not satisfied.
Let $1 = n_0 < n_1 < \cdots < n_k < \dots$ be a sequence of positive integers such that for all $k\geq 1$ 
\begin{equation}\label{d11}
\frac{\ln n_k}{n_k}\geq \frac{\ln n_{k+1}}{n_{k+1}}.
\end{equation}
Define $(\ell_n)$ as follows
$$
 \ell_n := \frac{\ln n_{k+1}}{n_{k+1}}, \ \ {\rm when}\ \ 
n_1 + \dots + n_k <n \leq n_1 + \dots + n_k + n_{k+1}.  
$$
Clearly $(\ell_n)$ is decreasing and if $n_k$ grows fast enough, then
$$
\sum_{n=1}^\infty \ell_n^2 < \sum_{k=1}^\infty n_k \cdot \frac{\ln^2 n_k}{n_k^2}= \sum_{k=1}^\infty\frac{\ln^2 n_k}{n_k}< \infty.
$$
For $n = n_1 + \dots + n_k$, we have
$$
\frac{n \ell_n}{ \ell_1 + \cdots + \ell_n}=\frac{(n_1 + \dots + n_k) \frac{\ln n_k}{n_k}}{ \ln n_1 + \cdots + \ln n_k}.
$$
If $n_k$ grows fast enough, then
$$
{\overline{\lim_{n \rightarrow \infty}}}\frac{n \ell_n}{ \ell_1 + \cdots + \ell_n} \geq \lim_{k \rightarrow \infty}\frac{\frac{n_1}{n_k}\ln n_k+ \dots + \frac{n_{k-1}}{n_k}\ln n_k+ \ln n_k}{\ln n_1 + \cdots + \ln n_k}=1.
$$

(B) \eqref{s2} is not necessary for {\em  \eqref{suff-cond1-}}.

For this we modify the construction above. For all even $k$s, we alter the construction above as follows. For all $n \in \mathbb{N}$, with
$$
n_1 + \dots + n_k \leq n \leq n_1 + \dots + n_{k+1},
$$
we define
$$
\ell_n = \frac{c}{n},
$$
where $c>1$.
Note that 
$$
\frac{\ln n_k}{n_k}\geq \frac{c}{n_1 + \dots + n_k}.
$$
Next we take $n_{k+1}$ so large that 
$$
\sum_{n=1}^{n_1 + \dots  + n_{k+1}}\frac{1}{n^2}e^{a(\ell_1 + \dots +\ell_n)}>k.
$$
This is doable, since $ac \geq 1$.
As a result, we will get a sequence $(\ell_n)_{n \geq 1}$ for which \eqref{s2} fails, but for all $a\geq c$ the series in \eqref{suff-cond1-} diverges.

%	\section{Proof of Theorem. \ref{thm:non-cover}}

\iffalse
\section{Proof of Theorem. \ref{thm:non-cover}}

	  {\bf Proof of Theorem \ref{thm:non-cover}.}   Let 
	  $$
	       Q_n(t) = \prod_{j=1}^n \frac{1- \chi_{B(\xi_n, r_n)}(t)}{1 - \mu(B(t, r_n))}.
	  $$
	  Notice that $Q_n(t)=0$ means $t$ is covered by one of the intervals $I_j$ ($j=1, 2,\cdots, n$). Let
	  $$
	          M_n =  \int_{\frac{1}{2}U} Q_n(t) dt. 
	  $$
	  This is a martingale. Observe that $\mu(B(t, r_n)) \sim f(t) \ell_n$ if $t \in \frac{1}{2}U$.  We can estimate
	  $$
	     \mathbb{E} M_n^2 \approx 
	     \int_{\frac{1}{2}U} \int_{\frac{1}{2}U} \exp\left(
	     f(\frac{t+s}{2})\sum_{n=1}^\infty (\ell_n -|t-s|)_+\right) dt ds.
	  $$
	  Since $f(x) \le a$ over $U$. The assumption implies that
	  the last double integral is finite. So, the martingale 
	  $M_n$ converges in $L^2$-norm. Then almost surely,
	  there exists a point in $\frac{1}{2}U$ which is not covered.
	  
	\fi

% SEC PROOF 1 & 4
\section{Proofs of Theorem ~\ref{thm1}  , Corollaries ~\ref{cor1},~\ref{cor2} and Theorem \ref{thm_seq}}
\subsection{Proof of Theorem ~\ref{thm1}}

{\em Case $m_f=0$}. 
By assumption, we have \eqref{dom_1}. Hence, by Proposition \ref{l-1} the condition \eqref{cond01} is necessary and sufficient for covering the set $\mathbb{T}\setminus K_f$.
Obviously, the condition \eqref{cond0} is necessary for covering $K_f$. Otherwise for some $x_0 \in \mathbb{T}$ we will have
$$
\sum_{n=1}^\infty \mu_f(B(x_0, r_n))<\infty,
$$
a contradiction to the fact that $x_0$ is covered. 
Since $K_f$ is countable, then the condition \eqref{cond0} will also be sufficient for covering the set $K_f$. 

{\em Case $m_f>0$}. 
The sufficiency of the
first conditions in \eqref{cond1} follows from Proposition \ref{suff}. We now show its necessity.
As above, the necessity of the first condition in \eqref{cond1} follows from the assumptions \eqref{dom_1}. 

The second fact in \eqref{cond1} implies that $K_f\cap F_f$
and $K_f \cap (a_n + K_f\cap F_f)$ are all covered by Kahane's result and 
Corollary \ref{f-0}. The points in $K_f$, which are not covered by the
translates of $K_f\cap F_n$ are countable, these countable set are covered
according to the condition \eqref{cond0}.  
%which, by the Borel–Cantelli lemma, implies that $x$ is not covered %for the $\mu_f$-Dvorenczky covering. 
%Thus, if $\mathbb{T}$ is  covered, we should have $F_f\cap %K_f=\emptyset$, which, in turn, implies that $K_f$ is countable.  
%Moreover, again according to the Borel–Cantelli lemma, the condition %\eqref{cond0} will be necessary and sufficient for covering $K_f$.
\iffalse
According to Propositions \ref{l-1} and \ref{cond_seq}, both of the conditions \ref{l-1} and \ref{cond_seq} imply the first condition in \eqref{cond0}, i.e.
$$
\forall a> m_f, \sum_{n=1}^\infty \frac{1}{n^2}e^{a(\ell_1+\dots + \ell_n)}=\infty.
$$
But this, in view of Proposition \ref{l-1}, implies covering for the set $\mathbb{T}\setminus K_f$. Observe that from Proposition  \ref{l-1}, this condition is also sufficient for covering the set $\mathbb{T}\setminus K_f$. Thus, both of the condition \eqref{cond0} and \eqref{cond1} are necessary and sufficient for covering the circle for $m_f=0$.
\fi 
Observe that when $m_f>0$, the condition \eqref{cond0} is automatically fulfilled. This is because $\mu_f(B(x, r_n)) \ge m_f \ell_n$
and $\sum \ell_n =\infty$. Actually we get more
$$
    \forall x \in \mathbb{T}, \quad \sum_{n=1}^\infty \mu_f(B(x, r_n)) =\infty.
$$
\iffalse
Indeed, by the Lebesgue differentiation theorem, for almost all $x \in \mathbb{T}$ we have
$$
\lim_{n \rightarrow \infty}\frac{\mu_f(B(x, r_n))}{\ell_n}= f(x).
$$
Hence, for any $\epsilon>0$, for almost all $x$ we have
$$
\sum_{n=1}^\infty \mu_f(B(x, r_n)) \leq C_x\sum_{n=1}^\infty(f(x) + \epsilon)\ell_n
$$
for some $C_x>0$. But $m_f>0$ implies $\sum_{n=1}^\infty \mu_f(B(x, r_n))=\infty$ for all $x \in \mathbb{T}$.  So, we must have $\sum_{n=1}^\infty \ell_n=\infty$. From this and in view of the estimate below, the condition \eqref{cond0} should hold for arbitrary $x \in \mathbb{T}$
$$
\sum_{n=1}^\infty m_f\ell_n \leq \sum_{n=1}^\infty \mu_f(B(x, r_n)).
$$
Observe, however, that the condition \eqref{cond0} is not automatic when $m_f = 0$.
\fi
So, we have only to check the  necessity of the second condition in \eqref{cond1} 
which, in turn, follows from Proposition \ref{t-1}.
\iffalse  
 Since for all $x \in F_f \cap K_f$ we have the flatness condition \ref{flat}, then from Proposition \ref{t-1}, it follows
\begin{equation}
\int_{F_f \cap K_f} \int_{F_f \cap K_f} \exp{\left( m_f \sum_{k=1}^\infty (\ell_k - |t-s|)_+\right)}d\sigma(s)d\sigma(t)=\infty.
\end{equation}
This proves the necessity of the second condition in \eqref{cond1}. The necessity of the first condition in \eqref{cond1} again follows from the assumptions \eqref{s2} and \eqref{dom_1}. To see that we also have covering for $K_f$, we recall that if there is covering for $F_f \cap K_f$, then there is also covering for $F_f \cap K_f + a$, for any fixed $a>0$. Thus, the existence of a $\mu_f$-Dvoreczky covering for the set $F_f \cap K_f$ implies covering for the set $\cup_{n \geq 1}\{K_0 + a_n\}$, where $a_n \in \mathbb{T}$, $n \geq 1$. By assumption
$$
K_f \subset \cup_{n \geq 1}\{F_f \cap K_f + a_n\} \cup K_0
$$
where $K_0$ is a countable set. Thus, we will have covering for $K_f$.
The sufficiency of the conditions \eqref{cond0} and \eqref{cond1} follows from Proposition \ref{suff}. 
\fi

\subsection{Proof of Corollary ~\ref{cor1}}
\begin{proof}
By assumption, the set of points where we do not have a Lipschitz dominant $g \in \lip(U)$, is countable, hence the condition \eqref{cond0} will be necessary and sufficient for covering this set.
As for all $x \in K_f \cap F_f$, there is a set $U$ with $x \in U$ and a function $g_x=g \in \lip(U)$, so that $f(t)\leq g(t)$ for almost all $t \in U$ and $f(t)=g(t)$, for $t \in K_f \cap U$. Since $g$ is continuous at $x$ and $f(x)=g(x)=m_f$, then $f$ is continuous at $x$ too, so according to Corollary \ref{cont} the first condition in \ref{cond1} is necessary and sufficient for covering the set $K_f^c$. To finish the proof, we need to show that for $\forall x \in K_f$ we have the flatness condition.

Fix $x\in K_f$. Let $C$ be the  Lipschitz constant of $g$.
Hence
$$
\mu(B(x, r_n))\le\int_{x-r_n}^{x+r_n}g(y)dy \leq \int_{x-r_n}^{x+r_n}(C|x-y|+|g(x)|)dy
%$$
%$$
%\leq \int_{x-r_n}^{x+r_n}|x-t|dt + f(x)\ell_n 
\le C r_n^2 + \ell_n f(x).
$$
Since $x \in K_f$, then $f(x)=m_f$. Hence
$$
|\mu(B(x, r_n))-m_f \ell_n|\leq C r_n^2.$$
From which
$$
\sum_{n=1}^\infty |\mu(B(x, r_n))-m_f \ell_n|\leq C \sum_{n=1}^\infty r_n^2.
$$

\indent {\rm (1)}
Let $m_f=0$ and $\sum_{n=1}^\infty \ell_n^2<\infty$, then
$$
\sum_{n=1}^\infty \mu(B(x, r_n)) < \infty.
$$
Hence, there will be no $\mu_f$-Dvoretzky covering.

\indent {\rm (2)}
If $m_f>0$, then we can assume
$$
\sum_{n=1}^\infty \ell_n^2 < \infty,
$$
from which
$$
\sum_{n=1}^\infty |\mu(B(x, r_n))-m_f \ell_n|< \infty.
$$

Thus, at $x$ we have the flatness property. To finish the proof we will need to apply Theorem \ref{thm1}.

\end{proof}

\subsection{Proof of Corollary ~\ref{cor2}}
\begin{proof}
Let $K = K_f \cap F_f$. By assumption, $K$  is of positive Lebesgue measure. Suppose $\mathbb{T}$ is covered. Then so is $K$ which supports the Lebesgue measure restricted on $K$,  and by Proposition \ref{t-1},  we have
\begin{equation}
\int_K \int_K \exp{\left( m_f \sum_{k=1}^\infty (\ell_k - |t-s|)_+\right)}d t ds=\infty,
\end{equation}
which is equivalent to the condition \eqref{whole} 
according to Lemma \ref{lem-1}. Thus, the condition \eqref{whole} is necessary for covering the circle. 
The sufficiency of the condition \eqref{whole} for covering the circle follows from  Proposition \ref{c-2}. 
\end{proof}

\subsection{Proof of Theorem \ref{thm_seq}}
\begin{proof}
Observe that $\ell_1 + \dots + \ell_n = c\log n + O(1)$. Hence
\begin{equation}\label{c/n}
\sum_{n=1}^\infty \frac{1}{n^2}e^{a(\ell_1 +\cdots +\ell_n)}= \infty
\Leftrightarrow ac \geq 1.
\end{equation}

Assume 
$
cm_f \geq 1.
$
\iffalse
We have
$$
\ell_1 + \dots + \ell_n = c \ln n + o_{n \rightarrow \infty}(1).
$$ 
Then
$$
\sum_{n=1}^\infty \frac{1}{n^2} \exp(m_f(\ell_1+\cdots +\ell_n))\approx C \sum_{n=1}^\infty \frac{1}{n^2} \exp(cm_f \ln n)=\sum_{n=1}^\infty \frac{1}{n^{2-cm_f}}.
$$
The last series will be convergence if and only if $cm_f \geq 1$.
\fi
By   Corollary \ref{c-2} and (\ref{c/n}) applied to $a=m_f$, the circle is covered.  

Conversely assume that the circle is covered.  
Remark that  the condition (\ref{s2}) %in Proposition \ref{cond_seq}
is satisfied. Indeed,
\iffalse now that the sequence $\ell_n = \frac{c}{n}$ satisfies the conditions of Theorem \ref{cond_seq}. Indeed
\fi 
$$
\frac{n \ell_n}{\ell_1 + \dots + \ell_n}=\frac{c}{c \ln n + O(1)}=o(1).
$$
Then by Proposition \ref{cond_seq}, we have
$$
\forall \epsilon >0, \ \ \ \sum_{n=1}^\infty\frac{1}{n^2}e^{(m_f+\epsilon)(\ell_1 + \dots + \ell_n)}=\infty,
$$
which, according to (\ref{c/n}), implies
%$$
%\forall \epsilon>0, \quad %\sum_{n=1}^\infty\frac{1}{n^{2-c\left(m_{f}+\epsilon\right)}}=\infty.
%$$
%Hence, 
$c(m_f + \epsilon)\geq 1$ for all $\epsilon>0$. Therefore
$
cm_f \geq 1.
$
\end{proof}

%\end{proof}
% PROOF 1
%\begin{proof}
	%[\quad Proof of Theorem 1] 
%	\end{proof}

\medskip
\textit{Acknowledgement.}
The second author would like to thank the School of Mathematics and Statistics
of the Central China Normal University for its hospitality during his visit in the Spring semester of 2018, where the work was started.

% REFERENCE(参考文献)
%== styles options:amsplain, amsalpha,or(for "historical")natbib style.
\bibliographystyle{plain}
%== Insert the bibliography data here.

\end{document}